\documentclass[12pt,a4paper,twoside]{article}
\usepackage[english]{babel}
\usepackage{csquotes}
\usepackage{mathrsfs,mathtools,amsmath, amsthm,amssymb, amscd, amsfonts}
\usepackage{thmtools}  
\usepackage{setspace}
\usepackage{graphicx}
\usepackage{tabularx}
\usepackage[table]{xcolor}
\usepackage{ltablex}
\usepackage{enumitem,kantlipsum}
\usepackage{bbm}
\usepackage{longtable}
\usepackage{pdfpages}
\setlist[itemize]{noitemsep}
\setlist[enumerate]{itemsep=0mm}
\usepackage[top=3cm,bottom=3cm, left=3cm,right=3cm,footskip=1cm]{geometry}
\usepackage{appendix}
\usepackage{tikz}
\usepackage{tikz-cd}
\usepackage{xcolor}
\usepackage{adjustbox}
\definecolor{persianblue}{rgb}{0.11, 0.22, 0.73}

\definecolor{persiangreen}{rgb}{0.0, 0.65, 0.58}
\definecolor{antiquefuchsia}{rgb}{0.57, 0.36, 0.51}

\theoremstyle{definition}
\newtheorem{theorem}{Theorem}[section]
\newtheorem{lemma}[theorem]{Lemma}

\newtheorem{question}[theorem]{Question}
\newtheorem{definition}[theorem]{Definition}

\newtheorem{proposition}[theorem]{Proposition}

\newtheorem{conjecture}[theorem]{Conjecture}

\newtheorem{namedlemma}{}

\usepackage{stackengine}
\stackMath

\usepackage[bookmarks=false,hidelinks]{hyperref}
\hypersetup{
	colorlinks   = true, 
	urlcolor     = antiquefuchsia, 
	linkcolor    = persianblue, 
	citecolor   = persianblue 
}
\usepackage{url}
\usepackage{nameref}
\usepackage[nameinlink,capitalize]{cleveref}
\makeatletter
\if@cref@capitalise
\Crefname{claim}{Claim}{Claims}
\Crefname{subclaim}{Sub-claim}{Sub-claims}
\Crefname{lemma}{Lemma}{Lemmas}
\Crefname{question}{Question}{Questions}
\Crefname{fact}{Fact}{Facts}
\Crefname{remark}{Remark}{Remarks}
\Crefname{observation}{Observation}{Observations}

\usepackage{accents}

\newcommand{\ad}{\textsf{AD}} 
\newcommand{\dc}{\textsf{DC}} 
\newcommand{\ac}{\textsf{AC}} 
\newcommand{\zf}{\textsf{ZF}} 

\newcommand{\cof}[1]{\mathrm{cof}(#1)} 
\newcommand{\rang}[1]{\mathrm{rang}(#1)} 
\newcommand{\rest}[1]{\restriction{#1}} 

\newcommand{\ord}{\mathrm{ORD}} 
\newcommand{\R}{\mathbb{R}} 
\newcommand{\N}[1]{\mathrm{N}} 

\newcommand{\df}{\coloneqq} 

\newcommand{\mbb}[1]{\mathbb{#1}} 
\newcommand{\mcl}[1]{\mathcal{#1}} 

\newcommand{\s}{\subseteq}

\usepackage[defernumbers=true,maxnames=6,
backend=biber,style=numeric,giveninits=true,sortcites=true,
sorting=nty,doi=true,isbn=true,url=false,safeinputenc]{biblatex}
\begin{document}
	
	\title{A Note on a Theorem of Apter}
	\author{\bigskip Rahman Mohammadpour, Otto Rajala, Sebastiano Thei\\ IMPAN}
	\date{}
\maketitle
\begin{abstract}
We show that the consistency of $\zf+\ad_{\R}+`` \Theta$ \textsf{is measurable}" implies the consistency of $\zf+``\Theta$ \textsf{is the least strongly regular cardinal and the least measurable cardinal}"$+$ \textsf{$``$all uncountable cardinals below $\Theta$ are of countable cofinality.}"
\end{abstract}
\section{Introduction}
Large cardinal axioms are typically stated in terms of elementary embeddings from the universe $V$ into some transitive subclass $M$. The seminal idea is due to Scott who provided a reformulation of measurability, where a cardinal $\kappa$ is measurable if it carries a \emph{measure}, i.e., a $\kappa$-complete normal  non principal ultrafilter on $\kappa$. Specifically, he proved that a cardinal is measurable if and only if it's the critical point of an elementary embedding $j:V\rightarrow M$ into a transitive model $M$. However, in the absence of the axiom of choice ($\ac$) the equivalence may no longer hold. Remarkably, Jech \cite{Jech19681CB} showed in the 1960s that it is consistent for $\omega_1$ to be measurable.

A natural competitor of $\ac$ is the \emph{Axiom of Determinacy} ($\ad$) which asserts that every set of reals is determined, i.e., in every two-player game of length $\omega$ in which players play integers, a player has a winning strategy (here we treat $\omega^{\omega}$ as the set of reals.) Indeed, $\ad$ wipes out most of the usual patterns of cardinals inoculated by the non-constructive nature of $\ac$. For instance, a result due to Steel \cite[Theorem 8.27]{steel2009outline} asserts that, under $\ad+V=L(\mathbb{R})$, all regular cardinals below $\Theta$ are measurable, where $\Theta$ is the least cardinal onto which the real numbers $\mathbb{R}$ cannot be mapped. 

This configuration prompts the problem of identifying the smallest possible measurable cardinal in the absence of the axiom of choice. Although it is consistent that the first measurable cardinal be as small as $\omega_1$, this observation no longer yields an immediate answer once one imposes the additional requirement that the measurable cardinal be inaccessible. A first fruitful attempt has been provided by Apter \cite{Apter_AD_pattern}. He proved that, under $\ad+V=L(\mathbb{R})$, there is a model of $\zf$ where the least measurable cardinal is the least regular limit cardinal. More specifically, he exploited the Prikry forcing machinery within a $\zf$ framework to singularize any measurable cardinal contained in a given subset of $\Theta$, while preserving the measurability of all remaining measurable cardinals. Along the same line, Gitik, Hayut and Karagila \cite{gitik2024first} showed that if $\kappa$ is a $\kappa^{++}$-supercompact cardinal, then there is a forcing extension in which $\kappa$ is a measurable cardinal, which is also the least strongly inaccessible cardinal. We note that their definition of \emph{inaccessibility} is that a cardinal $\kappa$ is \emph{inaccessible} if it is uncountable and for every $x\in V_\kappa$ and every $f:x\to\kappa$, $\rang{f}$ is bounded. 

In this paper we are interested in reducing the large cardinal strength of the above theorem of Gitik, Hayut, and Karagila. However, our notion of \emph{inaccessibility} is slightly different, although both  are equivalent under $\ac$.  Nevertheless, both imply regularity. To  distinguish these two notions, we call ours \emph{strongly regular}.
A cardinal $\kappa$ is \emph{strongly regular} if it is uncountable and for every $\alpha<\kappa$ and every $f:\mcl P(\alpha)\to\kappa$, $\rang{f}$ is bounded in $\kappa$.  Recall that a measure $\mu$ is said to be \emph{$\mathbb{R}$-complete} if whenever $\langle A_x\mid x\in\mathbb{R}\rangle$ is a sequence of elements in $\mu$, $\bigcap_{x\in\mathbb{R}}A_x\in\mu$. Note that $\mbb R$-completeness implies  $\Theta$-completeness. We let $\Theta_{\sf meas}$ be the conjunction of $\ad_{\R}$ and ``\textsf{there is an $\mbb R$-complete measure on $\Theta$}''.  Recall that also $\ad_{\R}$ states that any $\omega$-length alternating two-player game with perfect information in which players play reals  is determined. 

Our main theorem reads as follows.
\begin{theorem}\label{mainthm}
Assume $\zf+ \Theta_{\sf meas}$. There is an extension satisfying $\zf$, in which $\Theta$ is strongly regular cardinal, and simultaneously, the least uncountable regular cardinal and the least measurable cardinal.
\end{theorem}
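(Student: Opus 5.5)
We will work over a ground model $V\models\zf+\Theta_{\sf meas}$ and obtain the extension by a symmetric (or simply $\zf$-style) Prikry-type iteration. This iteration singularizes every uncountable regular cardinal below $\Theta$ to countable cofinality and leaves $\Theta$ untouched, in the spirit of Apter's argument. The starting observations are standard consequences of $\ad_{\R}$. Below $\Theta$ there are cofinally many regular cardinals, and each regular $\kappa<\Theta$ carries a normal measure $\mu_\kappa$. These measures can be chosen uniformly, via a single function $\kappa\mapsto\mu_\kappa$ definable from a set of reals, using the coding of measures by reals and uniformization under $\ad_{\R}$. With such a choice we let $\mathbb P$ be the full-support, or Magidor-style, product/iteration of Prikry forcings $\mathbb P_{\mu_\kappa}$ for $\kappa$ regular in $(\omega,\Theta)$. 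We then pass to $V[G]$, or to the symmetric submodel generated by the finite-support-like pieces if full genericity breaks $\zf$ (I expect it does not, since Prikry forcing adds no bounded subsets in the relevant sense).

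The key steps, in order, are as follows.
\begin{enumerate}
\item Prove the Prikry property for $\mathbb P$ in $\zf$, using only the $\kappa$-completeness of each $\mu_\kappa$ and the uniform choice of measures. This is the Apter machinery, which the paper relies on.
\item Deduce that every cardinal $\kappa\in(\omega,\Theta)$ that was regular in $V$ has cofinality $\omega$ in the extension. Cardinals that were already singular in $V$ keep cofinalities that collapse to countable ones, since their cofinalities were regular below them and are themselves singularized. No new subsets of small ordinals appear beyond the Prikry sequences, so cardinals are preserved. Hence every uncountable cardinal below $\Theta$ is singular of countable cofinality, and $\Theta$ is the least uncountable regular cardinal.
\item Show that $\Theta$ remains strongly regular. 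In $V$, for $\alpha<\Theta$ there is a surjection $\R\to\mathcal P(\alpha)$, and no function $\mathcal P(\alpha)\to\Theta$ is unbounded, by the definition of $\Theta$. In the extension, a name for $f:\mathcal P(\alpha)\to\Theta$ can be analysed by the Prikry property: every $\mathcal P(\alpha)^{V[G]}$-element is decided by a Prikry condition below stage $\alpha$ together with a ground-model set. This reduces $f$ to a ground-model function with domain coded by reals, which is bounded below $\Theta$.
\item Show that $\Theta$ stays measurable. Extend the $\mathbb R$-complete measure $U$ on $\Theta$ by letting $\dot X\in\dot U$ iff some condition forces $\dot X\supseteq A$ for some $A\in U$. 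Since $\mathbb P$ restricted below any bound is "small" (of size $<\Theta$ in the surjective sense, coded by reals), $\mathbb R$-completeness lets us intersect over all conditions and decide every $X$. This yields a $\Theta$-complete ultrafilter, which is normal by the same argument applied to regressive functions.
\item Measurability also fails below $\Theta$, since every candidate there is singular.
\end{enumerate}

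The main obstacle is step 4 together with step 3: $\mathbb P$ itself is not of size $<\Theta$, because it involves measures on all regular cardinals below $\Theta$. The hard part is a factorization argument showing that any subset of $\Theta$, or function on $\mathcal P(\alpha)$, in the extension is determined by a part of the generic of size $<\Theta$ together with a direct extension lying in the $\Theta$-complete direct-extension ordering. I would first try to prove that the direct-extension order is $\Theta$-closed in the sense of intersecting measure-one sets indexed by reals. The $\mathbb R$-completeness of $U$ and the $\R$-coded uniformity of $\kappa\mapsto\mu_\kappa$ are exactly what should make this fusion work in $\zf$.
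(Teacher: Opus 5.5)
There is a genuine gap, and it is the one you flag yourself as the ``main obstacle''. Nothing in the proposal produces a model in which every relevant set is added by a small piece of the generic, and the tool you suggest for this cannot work.

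First, the symmetric model is not a fallback for saving $\zf$: every generic extension of a $\zf$ model satisfies $\zf$. It is needed because the full generic sequence is harmful. For the finite-support product, $V[G]$ collapses cardinals. Let $\lambda_n$ ($n<\omega$) be the first $\omega$ elements of $\mathcal{M}$ above $\omega_1$, and write $D_\kappa$ for the Prikry sequence of $G_\kappa$. Any condition can be extended at some $\lambda_n$ outside its support by $(\{\alpha\},\lambda_n\setminus(\alpha+1))$. By genericity, the range of $n\mapsto\min D_{\lambda_n}$ then contains every $\alpha<\omega_1$, so $\omega_1^V$ becomes countable.

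For a full-support product or a Magidor iteration you would need two things in $\zf$. One is a Prikry property for the whole forcing; Apter's machinery, as used here, gives it only for a single $\mathbb{P}_\mu$, with finite products handled by iteration. The other, for the iteration, is lifting each $\mu_\kappa$ through infinitely many earlier stages, whereas \cref{lifting_a_measure} handles one. Neither is supplied.

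Even granting them, $V[G]$ contains objects depending on cofinally many coordinates, e.g.\ $\{\min D_\kappa:\kappa\in\mathcal{M}\}\s\Theta$. Your Steps 3 and 4 only treat a bounded piece of the forcing, so they do not apply to such objects. The fusion you propose to bridge this, closing the direct-extension order under $\R$-indexed meets, fails at every coordinate. For $\kappa<\Theta$ take a surjection $s\colon\R\to\kappa$; then $\langle\kappa\setminus\{s(x)\}: x\in\R\rangle$ is a family of $\mu_\kappa$-measure-one sets with empty intersection. $\R$-completeness is available only for the measure on $\Theta$. In particular, in Step 3 you cannot decide the $\mathcal{P}(\alpha)$-many values of $\dot f$ by a single direct extension of the coordinates above $\alpha$.

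The missing idea is to keep the finite-support product $\mathbb{P}=\prod^{\text{Fin}}_{\kappa\in\mathcal{M}}\mathbb{P}_\kappa$ but pass to the symmetric model $N$. This model is generated by $\mathcal{L}_1$-terms from $V$ and the \emph{individual} filters $G_\kappa$, so no infinite sequence of them is available. One then uses Apter's capturing lemma (\cref{lem: capturing}): every set of ordinals in $N$ lies in $V[G_c]$ for some finite $c\s\mathcal{M}$. This is exactly the factorization you were after, and it reduces everything to finite products, where the one-step results of \cref{fact: Prikry_in_ZF} can be iterated downward along $c$.

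\begin{itemize}
\item Cardinals and $\Theta$ are preserved, and each $\kappa\in\mathcal{M}$ gets cofinality $\omega$ since $V[G_\kappa]\s N$.
\item For strong regularity, a homogeneity argument places a given $f\colon\mathcal{P}(\alpha)^N\to\Theta$ in the model generated by the $G_\beta$ with $\beta\le\alpha$, together with $G_\kappa$ for $\kappa$ in a \emph{fixed finite} set $d'$ above $\alpha$. Names and conditions are then coded by subsets of $\xi=\max d'$. So $f$ is bounded via a ground-model map $g\colon\mathcal{P}(\xi)\to\Theta$ composed with a surjection $\R\to\mathcal{P}(\xi)$.
\item For measurability, the lift of the measure on $\Theta$ is a measure in each $V[G_c]$ by applying \cref{lifting_a_measure} $|c|$ times. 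Every subset of $\Theta$, every short sequence of measure-one sets, and every regressive function in $N$ already lies in some such $V[G_c]$.
\end{itemize}
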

 It has been shown by Rachid and Sargsyan \cite{Atmai-Grigor} that the above assumption is consistent. On the other hand, $\zf+ \Theta_{\sf meas}$ is weaker than Woodin limit of Woodin cardinals (see \cite{Grigor-Nam}). 

The guiding expectation motivating the present work relies on the fact that the framework of \emph{Nairian models} \cite{blue2025nairian} should ultimately yield a construction of a model of $\zf$ in which the least measurable cardinal coincides with the least inaccessible cardinal in the sense of Gitik, Hayut and Karagila. More concretely, a suitable adaptation of the techniques employed in \cite{blue2026failure} to show that the failure of square at all uncountable cardinals is weaker than a Woodin limit of Woodin cardinals suggests the following conjecture.

\begin{conjecture}
    The theory ``$\zf+\Theta$ is the least measurable and the least inaccessible'' is weaker than a Woodin limit of Woodin cardinals.
\end{conjecture}

The organization of the paper is as follows. In the next section, we fix some notation and basics, and in the \cref{sec:main forcing}, we prove the main theorem while closely following Apter's analysis \cite{Apter_AD_pattern}. The last section is devoted to open questions.

\section{Basics}

  We follow standard notation. In particular, our forcing convention is that a condition $p\in\mbb P$ is stronger than $q\in\mbb P$ if $p\leq_{\mbb P} q$.
Let us now recall the definition of the Prikry forcing using a measure $\mu$. 
\begin{definition}
Let $\mu$ be a measure on $\kappa$.
The \emph{Prikry forcing} $\mathbb P_\mu$ consists of conditions $p\df(s_p,A_p)$, where
\begin{enumerate}
   \item $s_p\in [\kappa]^{<\omega}$, 
    \item $A_p\in\mu$, and
    \item $\max(s_p)<\min(A_p)$.
\end{enumerate}
A condition $p$ is stronger than $q$, i.e., $p\leq q$, if $s_p\supseteq s_q$, $A_p\subseteq A_q$, 
and  $s_p\setminus s_q\subseteq A_q$.
\end{definition}
Recall that the \emph{Prikry property} for $\mathbb{P}_\mu$ states that
    for every formula  $\varphi$ in the language of $\mathbb P_\mu$, and every $p=\langle s, A\rangle\in\mathbb{P}_\mu$, there is $B\in\mu$ with $B\s A$ such that $\langle s, B\rangle$ decides $\varphi$. 
\begin{theorem}[Apter \cite{Apter_AD_pattern}, \zf]\label{fact: Prikry_in_ZF}
     Let $\mu$ be a measure on $\kappa$. Then 
     \begin{enumerate}
         \item\label{item: PP} $\mathbb P_\mu$ has the Prikry property;
         \item\label{item: card_pres} $\mathbb P_\mu$ is cardinal preserving;
         \item $\Theta^{V^{\mathbb{P}_\mu}}=\Theta^{V}$.
     \end{enumerate}
\end{theorem}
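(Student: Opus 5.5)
I plan to prove the three items in order, following the classical argument but checking at each step where choice would normally be used. For (1), fix $p=\langle s,A\rangle$ and a formula $\varphi$. Every extension of $p$ has the form $\langle s\cup t,B\rangle$ with $t\in[A]^{<\omega}$, $\max s<\min t$, $B\in\mu$, $B \s A$. I define a colouring $c:[A]^{<\omega}\to\{0,1,2\}$: set $c(t)=1$ if some $B\in\mu$ gives $\langle s\cup t,B\rangle\Vdash\varphi$, set $c(t)=2$ if some $B$ gives $\langle s\cup t,B\rangle\Vdash\neg\varphi$, and $c(t)=0$ otherwise. This is well defined because two such $B$'s would have intersection in $\mu$, and no witnesses need to be picked. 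Rowbottom's theorem for a normal measure in $\zf$ then gives $H\in\mu$ homogeneous for $c$ on each $[H]^n$. The usual proof uses only normality and closure under diagonal intersections indexed by ordinals, together with canonical definitions of the sets, so it needs no choice. The sets $H_n$ are defined uniformly in $n$, and $\bigcap_n H_n\in\mu$ by $\omega_1$-completeness.

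The delicate point in (1) is the standard argument that $\langle s,H\rangle$ decides $\varphi$. Suppose it does not. Then some extension $\langle s\cup t,B\rangle$ forces $\varphi$, and some extension $\langle s\cup t',B'\rangle$ forces $\neg\varphi$. By homogeneity we may take $|t|=|t'|=n$, which forces colours $1$ and $2$ on the same level, a contradiction. To justify taking equal lengths: if all colours on level $n$ are $0$ for every $n$, then densely below $\langle s,H\rangle$ no condition decides $\varphi$, which is absurd. The remaining bookkeeping is to keep fixed witnesses $B_t$ when amalgamating. Since $\mu$ is $\kappa$-complete and normal, I can define $B^*=\{\alpha : \alpha\in B_t \text{ for all } t\subseteq\alpha\}$. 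Here each $B_t$ must itself be chosen without choice. I expect this to be the main obstacle. My first attempt is to avoid it by working only with the colour and using homogeneity, so no individual witnesses are ever needed.

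For (2), the Prikry property gives that $\mathbb P_\mu$ adds no new bounded subsets of $\kappa$ in the appropriate sense. More precisely, for a name $\dot f:\lambda\to\mathrm{ORD}$ with $\lambda<\kappa$, I decide each value using the direct-extension sets. By $\kappa$-completeness, I intersect the $\lambda$ many measure-one sets, each canonically given as the set $\{\alpha:\ldots\}$ of those obtained from colourings. This shows that no cardinal $\le\kappa$ is collapsed. Cardinals above $\kappa$ are preserved because the forcing has size $2^\kappa$-ish, but I want an argument without choice. Any surjection in the extension from $\lambda$ onto $\delta$ induces, in $V$, a surjection from $\lambda\times[\kappa]^{<\omega}$ onto $\delta$: send $(\xi,s)$ to the unique value forced by some $\langle s,A\rangle$ if it exists. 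Every value is decided by some stem by the Prikry property and density, so for $\lambda\ge\kappa$ this is onto. Hence $|\delta|\le\lambda$ already in $V$, and cardinals $>\kappa$ are preserved.

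For (3), the inequality $\Theta^{V}\le\Theta^{V^{\mathbb P_\mu}}$ is immediate, since surjections from $\R$ persist. For the converse, no reals are added by (1). So given a name for a surjection of $\R$ onto $\delta$, the same stem trick yields in $V$ a surjection of $\R\times[\kappa]^{<\omega}$ onto $\delta$. When $\kappa<\Theta$, the set $\R\times[\kappa]^{<\omega}$ is a surjective image of $\R$, so $\delta<\Theta^V$. When $\kappa\ge\Theta$, I split according to whether $\delta>\kappa$ and handle that case by the cardinal-preservation computation in (2).
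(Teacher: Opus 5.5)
Your approach to (1) is the same as the paper's: colour $[A]^{<\omega}$, apply Rowbottom's theorem in $\zf$, and use homogeneity. You are right that no family of witnesses $B_t$ is ever needed. The contradiction uses only finitely many witnesses, obtained by existential instantiation, so the ``main obstacle'' you anticipate does not arise. The step that does need an argument is the reduction to stems of equal length, and your justification for it does not work. Knowing that not every level has colour $0$ does not rule out, say, $[H]^3$ being monochromatic of colour $1$ and $[H]^5$ of colour $2$. The missing idea is that a stem can be lengthened inside its own measure-one set without changing what it forces. Suppose $\langle s\cup t,B\rangle\le\langle s,H\rangle$ forces $\varphi$ with $|t|=n<m$. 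Pick $t'\supseteq t$ with $t'\setminus t\in[B]^{m-n}$. Then $t'\in[H]^m$, because $t\cup B\subseteq H$, and $\langle s\cup t',B\setminus(\max t'+1)\rangle$ still forces $\varphi$, so $c(t')=1$. The same works for $\neg\varphi$ and colour $2$. So raise the shorter of your two stems to the length of the longer one; homogeneity then puts colours $1$ and $2$ on a single level, which is the contradiction.

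In (2), the sentence ``for a name $\dot f:\lambda\to\ord$ with $\lambda<\kappa$, I decide each value using the direct-extension sets'' is false. Let $\dot f(0)$ name the first point of the generic sequence above $\max s$. Then every $\langle s,B\rangle$ has extensions $\langle s\cup\{\alpha\},B\setminus(\alpha+1)\rangle$, $\alpha\in B$, that force pairwise different values. With $\lambda=\omega$, your claim would even put the Prikry sequence into $V$. Direct extensions decide single statements, not ordinal values. So argue instead as follows. For $\lambda<\kappa$, every $\dot X\subseteq\lambda\times\lambda$ is decided by intersecting the $\lambda$ many canonical sets that decide $(\xi,\zeta)\in\dot X$. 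Then, given a surjection $f:\lambda\to\delta$ in the extension with $\lambda<\delta\le\kappa$, take the prewellordering $\{(\xi,\zeta):f(\xi)\le f(\zeta)\}$. It lies in $V$, and its rank function is a surjection of $\lambda$ onto $\delta$ in $V$. You need this step for $\delta=\kappa$ in particular: under $\ad$ the measurable $\kappa$ can be a successor cardinal such as $\omega_1$, so it is not automatically a limit of preserved cardinals. Your same-stem argument for cardinals above $\kappa$ is correct, and it uses only that conditions with the same stem are compatible. In (3), the case $\kappa<\Theta$ is fine, and it is the only case the paper uses. The branch $\kappa\ge\Theta$ is not actually argued. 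There the map on $\R\times[\kappa]^{<\omega}$ gives no information, and (2) concerns well-ordered domains rather than $\R$.
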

\begin{proof}
    For (2) and (3) see, respectively, \cite[Lemma 1.3]{Apter_AD_pattern} (and its subsequent paragraph) and \cite[Lemma 1.4]{Apter_AD_pattern}. We verify the Prikry property. Accordingly, let $\varphi$ be a formula and fix $p=(s, A)\in\mathbb{P}_\mu$. Define a partition $h:[A]^{<\omega}\rightarrow 2$ as $h(t)=1$ if and only if there is $B\s A$ in $\mu$ with $\max(s\cup t)<\min(B)$ such that $(s\cup t, B)\Vdash\varphi$. Rowbottom's theorem holds, even without $\ac$ (see, e.g., \cite[Lemma 1.1]{Apter_AD_pattern}), and so there is $C\s A$ in $\mu$ homogeneous for $h$, i.e., for each $n<\omega$ and each $s_1,s_2\in [C]^n$, $h(s_1)=h(s_2)$. We claim that $(s, B)$ decides $\varphi$. Otherwise, there are conditions $(s\cup s_1, B_1), (s\cup s_2, B_2)\leq(s, C)$ with $|s_1|=|s_2|$ such that $(s\cup s_1, B_1)\Vdash\varphi$ and $(s\cup s_2, B_2)\Vdash\neg\varphi$. But $h(s_1)\neq h(s_2)$ contradicting the fact that $s_1,s_2\in [C]^{|s_1|}$ implies $h(s_1)= h(s_2)$.
\end{proof}
\begin{definition}
Let $\mcl F$ be a filter on a set $X$.
    Let $G$ be a $V$-generic filter on a forcing $\mbb P$. Working in $V[G]$, we define the \emph{lift} of $\mcl F$ in $V[G]$ by
    \[\mcl F^{*G}\df\{x\s X \mid\exists y\in\mcl F (y\s x)\}\]   
\end{definition}

\begin{lemma}[Apter \cite{Apter_AD_pattern}, \zf]\label{lifting_a_measure}  
    Assume $\kappa<\lambda$ are measurable cardinals with measures $\mu_\kappa$ and $\mu_\lambda$, respectively. Let $G\subseteq \mathbb{P}_{\mu_\kappa}$ be a $V$-generic filter.   Then $\mu^{\ast G}_\lambda$ is a measure on $\lambda$. Moreover, if $\mu_\lambda$ is normal, then $\mu^{\ast G}_\lambda$ is normal
\end{lemma}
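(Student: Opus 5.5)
The plan is to show directly that $\mu^{*G}_\lambda$ is an ultrafilter on $\lambda$ in $V[G]$ that is $\lambda$-complete and non-principal, and normal when $\mu_\lambda$ is. Everything reduces to one key lemma: any subset of $\lambda$ or sequence in $V[G]$ that is small relative to $\lambda$ can be ``covered'' by ground-model sets in $\mu_\lambda$. We get this by deciding, below a condition, membership statements on a $\mu_\lambda$-large set. The essential point is that $\mathbb P_{\mu_\kappa}$ is ``small'' relative to $\lambda$: it has size at most $|[\kappa]^{<\omega}\times\mu_\kappa|$, which is a subset of $\mathcal P(\kappa)\times[\kappa]^{<\omega}$. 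By the Prikry property we only need to range over the stems $s\in[\kappa]^{<\omega}$, and there are only $\kappa$ many of them. In $\zf$ there is a well-ordered set of stems of size $\kappa<\lambda$.

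\textbf{Ultrafilter.} Let $\dot X$ be a name for a subset of $\lambda$, and let $p=(s,A)\in G$. For each $\alpha<\lambda$, Theorem~\ref{fact: Prikry_in_ZF}(\ref{item: PP}) applied to $\alpha\in\dot X$ shows that some $(s,B)\le p$ decides it. Whether such a $B$ forces ``yes'' or forces ``no'' depends only on $(s,\alpha)$, since any two measure-one sets are compatible. So we can define, without choice, $Y_s=\{\alpha<\lambda: \exists B\,(s,B)\Vdash \alpha\in\dot X\}$. One of $Y_s$ and $\lambda\setminus Y_s$ lies in $\mu_\lambda$. For the stem $s$ of a condition in $G$, we then want to see that $Y_s\subseteq X$ or $\lambda\setminus Y_s\subseteq \lambda\setminus X$ holds in $V[G]$.

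This is the \textbf{main obstacle}. For each $\alpha$ the deciding $B_\alpha$ differs, and we need a single condition in $G$ with stem $s$ that decides all $\alpha$ at once. My first attempt is a density argument that uses the $\kappa$-completeness of $\mu_\kappa$ only at the level of stems, while the $\lambda$-indexed choices go into the measure $\mu_\lambda$. For each $\alpha$, let $B^s_\alpha=\{\gamma\in A: \exists B\,(s,B)\text{ decides } \alpha\in\dot X \text{ and } \gamma\in B\}$. Even this union need not be in $\mu_\kappa$, so I would instead work with the collection of sets $B$ that decide. Given $q\le p$ in $G$ with stem $t$, the set of $\alpha$ for which some $(t,B)$ with $B\subseteq A_q$ decides $\alpha\in\dot X$ is all of $\lambda$. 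Hence for each stem $t$ and truth value we get a partition, and $\lambda$-completeness of $\mu_\lambda$, applied to the $\kappa$-many stems, yields a measure-one set $Z\in\mu_\lambda$ on which the decision for every stem $t$ is constant. We then use genericity: the generic $\omega$-sequence has stems $t$ in $G$, and for $\alpha\in Z$ the truth value of $\alpha\in X$ equals the truth value determined by the stem $t_\alpha$ of some condition in $G$. Because $Z$ is uniform across all stems, we get $Z\subseteq X$ or $Z\cap X=\emptyset$.

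\textbf{Completeness and normality.} The same argument shows $\lambda$-completeness. Given $\gamma<\lambda$ and a sequence $\langle X_\xi:\xi<\gamma\rangle\in V[G]$ of sets in $\mu^{*G}_\lambda$, fix a name for it and a single condition. By the covering argument just given, each $X_\xi$ contains a ground-model set $Z_{\xi,t}\in\mu_\lambda$, determined uniformly from the name and the stem $t$. The family $\{Z_{\xi,t}\}$ is indexed by $\gamma\times[\kappa]^{<\omega}$, which is in $V$ and of size less than $\lambda$. Its intersection is therefore in $\mu_\lambda$ and is contained in $\bigcap_\xi X_\xi$.

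For normality, let $f\in V[G]$ be regressive on a set in $\mu^{*G}_\lambda$. We define the ground-model partial functions $f_t(\alpha)=$ the unique $\beta$ such that some $(t,B)$ forces $\dot f(\alpha)=\beta$. Using the Prikry property, each $f_t$ is defined on a $\mu_\lambda$-large set. Normality of $\mu_\lambda$ makes each $f_t$ constant with some value $\beta_t$ on a set $Z_t\in\mu_\lambda$, and $\bigcap_t Z_t\in\mu_\lambda$. The genuine $f$ then takes values among the $\beta_t$ for stems $t$ appearing in $G$. Since these stems are comparable along the generic sequence, a further argument fixes one such $t$ and shows that $f$ is constant on a set in $\mu^{*G}_\lambda$.
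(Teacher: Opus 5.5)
Your strategy is the right one. For each stem $t\in[\kappa]^{<\omega}$ you read off the ground-model set $Y_t$ of those $\alpha$ for which some $(t,B)$ forces $\alpha\in\dot X$. You let $Z_t$ be whichever of $Y_t,\lambda\setminus Y_t$ lies in $\mu_\lambda$, and intersect over the $\kappa$ many stems using $\lambda$-completeness. (The paper gives no proof and cites Apter's Lemma 1.5.) However, the step you flag as the main obstacle is never resolved.

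The sentence ``Because $Z$ is uniform across all stems, we get $Z\subseteq X$ or $Z\cap X=\emptyset$'' does not follow. Uniformity only says that for each \emph{fixed} $t$ the bit $e_t$ (recording whether $Z_t=Y_t$) is the same for all $\alpha\in Z$. But the stem $t_\alpha$ at which $G$ decides $\alpha\in\dot X$ depends on $\alpha$, and $e_t$ genuinely depends on $t$. For example, if $S\in\mu_\kappa$, $S\neq\kappa$, and $\dot X$ names $\lambda$ when $\kappa_0\in S$ and $\emptyset$ otherwise, then $e_\emptyset=1$ but $e_{\{\gamma\}}=0$ for $\gamma\notin S$.

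The missing idea is persistence along the generic sequence. Suppose $q\in G$ has stem $s_m=\{\kappa_0,\dots,\kappa_{m-1}\}$ and decides $\alpha\in\dot X$. Then for every $m'\geq m$ there is $r\in G$ with $r\leq q$ and stem exactly $s_{m'}$: take $r'\in G$ below $q$ with stem $s_k$, $k\geq m'$, and put $r=(s_{m'},A_{r'}\cup(s_k\setminus s_{m'}))$. This $r$ decides $\alpha\in\dot X$ the same way as $q$. Since $\alpha\in Z_{s_{m'}}$, this gives $\alpha\in X\iff e_{s_{m'}}=1$ for all $m'\geq m$. Given $\alpha,\beta\in Z$, choose $m'$ past both decision stages to get $\alpha\in X\iff\beta\in X$.

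Two side remarks on this part. The single condition of $G$ deciding every $\alpha$, which you say is needed, does not exist in general and is not needed. Your intermediate target ``$Y_s\subseteq X$ or $\lambda\setminus Y_s\subseteq\lambda\setminus X$'' is false: if $\dot X$ names $\lambda\setminus\{\kappa_0\}$, then $Y_\emptyset=\lambda$.

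Once this step is repaired, your completeness argument goes through, with one correction. It is $Z_\xi=\bigcap_t Z_{\xi,t}$, not each $Z_{\xi,t}$, that lies inside $X_\xi$. The alternative $Z_\xi\cap X_\xi=\emptyset$ is excluded because $X_\xi\in\mu^{*G}_\lambda$.

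In the normality part, the claim that each $f_t$ is defined on a $\mu_\lambda$-large set is false. The Prikry property decides statements, not values of ordinal-valued names. Suppose $\dot f(\alpha)$ names $\kappa_0$ for $\kappa<\alpha<\lambda$ and $0$ otherwise. Then no $(\emptyset,B)$ decides $\dot f(\alpha)$ for $\alpha>\kappa$, since two different one-point stems from $B$ force different values. So $\mathrm{dom}(f_\emptyset)\subseteq\kappa+1$ is $\mu_\lambda$-null.

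The fix is as follows. Let $Z_t=f_t^{-1}(\beta_t)$ when $\mathrm{dom}(f_t)\in\mu_\lambda$, with $\beta_t$ given by normality of $\mu_\lambda$, and let $Z_t=\lambda\setminus\mathrm{dom}(f_t)$ otherwise. For $\alpha\in Z=\bigcap_t Z_t$, some condition of $G$ with stem $s_m$ decides $\dot f(\alpha)$. By the same persistence argument, $f(\alpha)=\beta_{s_{m'}}$ for all $m'\geq m$, so $f$ is constant on $Z$.

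The ``further argument'' you defer is exactly this persistence step, and it is the heart of the lemma. As written, the proposal therefore proves neither the ultrafilter property nor normality.
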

\begin{proof}
    See \cite[Lemma 1.5]{Apter_AD_pattern}.
\end{proof}

\section{The main theorem}\label{sec:main forcing}
Throughout the rest of the paper, we assume $\zf+\Theta_{\sf meas}$.
A theorem due to Kunen  states that under $\ad$
if $\kappa<\Theta$ is measurable, then
the set of measures on $\kappa$ is well-ordered in a canonical way, see  \cite[Theorem 13.7]{Lar00bookExte} for a proof. More importantly, there is a uniform definition of well-ordering. So for each measurable cardinal $\kappa<\Theta$, let $\mu_\kappa$ be the $\prec_\kappa$-least measure on $\kappa$. Now let us set  \[
\mcl M\df\{\kappa<\Theta\mid \kappa\text{ is measurable}\}.
\]
Note that under $\ad_{\R}$ a cardinal $\kappa<\Theta$ is regular if and only if it is measurable.
We denote the Prikry forcing $\mbb P_{\mu_\kappa}$ by $\mbb P_\kappa$. Let
\[
\mathbb{P}\df\prod^{\text{Fin}}_{\kappa\in \mcl M}\mathbb{P}_\kappa
\]
be the finite support product of the Prikry forcings $\langle \mbb P_\kappa\mid\kappa\in \mcl M\rangle$.
Note that for every  nonempty  set $c\subseteq \mcl M$, there is a natural projection 
\begin{align*}
 \pi_c:\mbb P&\longrightarrow \mbb P_c\df  \prod_{\kappa\in c}\mbb P_\kappa\\
    p&\mapsto p\restriction c
\end{align*}

For a (generic) filter $G$ is a $\mathbb{P}$-generic filter over $V$, we let $G_c$ be the induced (generic) filter over $\mbb P_c$. We shall then denote $G_{\{\kappa\}}$ by $G_\kappa$, for each $\kappa\in A$. 
\begin{lemma}\label{lem: same cardinals}
    For each finite $c\s \mcl M$, $V$ and $V[G_c]$ have the same class of cardinals.
\end{lemma}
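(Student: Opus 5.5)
I will argue by induction on $|c|$, adding the Prikry forcings one at a time. The tool at each step is \cref{fact: Prikry_in_ZF}\eqref{item: card_pres}, which says a single Prikry forcing built from a measure preserves cardinals; \cref{lifting_a_measure} will keep the later measures alive. For $c=\emptyset$ there is nothing to prove. Write $c=\{\kappa_1<\dots<\kappa_n\}$. First I observe that $\mbb P_c=\prod_{i\le n}\mbb P_{\kappa_i}$ is isomorphic to the iteration
\[
\mbb P_{\kappa_1}\ast\dot{\mbb P}_{\mu^{*}_{\kappa_2}}\ast\cdots\ast\dot{\mbb P}_{\mu^{*}_{\kappa_n}}.
\]
Here at stage $i$ we force with the Prikry forcing defined in $V[G_{\{\kappa_1,\dots,\kappa_{i-1}\}}]$ from the lifted measure $\mu_{\kappa_i}^{*}$.

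\textbf{Step 1: the lift is again a measure.} This follows from \cref{lifting_a_measure}, applied iteratively. By induction on $i$, in $V[G_{\{\kappa_1,\dots,\kappa_{i-1}\}}]$ the lifts $\mu^*_{\kappa_j}$ for $j\ge i$ are still ($\kappa_j$-complete, normal) measures. Indeed, at each step we force with a Prikry forcing on a smaller measurable cardinal, and \cref{lifting_a_measure} only needs that smaller cardinal together with the larger measure. Lifting in several steps gives the same filter as lifting in one step, since both are generated by the sets of $\mu_{\kappa_j}$.

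\textbf{Step 2: the product equals the iteration.} Since every measure-one set in $\mu^*_{\kappa_i}$ contains a ground-model set in $\mu_{\kappa_i}$, the conditions of the form $(s,A)$ with $A\in\mu_{\kappa_i}\cap V$ are dense in the stage-$i$ forcing. So the iteration has a dense subset isomorphic to $\prod_{i\le n}\mbb P_{\kappa_i}$. Consequently $V[G_c]=V[G_{\kappa_1}][G_{\kappa_2}]\cdots[G_{\kappa_n}]$, where each $G_{\kappa_i}$ is generic over the preceding model for the Prikry forcing built from a genuine measure there.

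\textbf{Step 3: conclusion.} Each successive extension preserves cardinals by \cref{fact: Prikry_in_ZF}\eqref{item: card_pres}. That result is a \zf\ theorem about an arbitrary measure, so it applies inside each intermediate model. Composing the $n$ steps, $V$ and $V[G_c]$ have the same cardinals.

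The main obstacle is Step 2: identifying the finite product with the two-step iteration, and in particular checking density and that genericity transfers. Genericity of the product generic for the iteration is not automatic without choice. I will verify it directly: a dense open set $D$ of the stage-$i$ forcing lies in the intermediate model, and I must pull it back to a dense subset of $\mbb P_{\{\kappa_1,\dots,\kappa_i\}}$ in $V$. This is done with the usual name argument, which uses no choice because the density witness can be read off from names rather than selected.

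If this becomes delicate, I will fall back on proving the Prikry property for the finite product $\mbb P_c$ directly. I would do this by the same Rowbottom homogeneity argument as in \cref{fact: Prikry_in_ZF}, applied coordinatewise from the top cardinal downward. I would then copy Apter's proof of cardinal preservation: with the Prikry property, any new function from some $\alpha$ onto a cardinal is decided by pure extensions. Those pure extensions are $\kappa_1$-closed in the relevant sense, so small sets remain in $V$.
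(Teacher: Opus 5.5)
Your argument is correct, but it runs in the opposite direction from the paper's.

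\textbf{The paper's route (top-down).} The paper removes the \emph{largest} cardinal first. It forces with $\mbb P_{\kappa_n}$, which preserves cardinals by \cref{fact: Prikry_in_ZF}\eqref{item: card_pres}. By the Prikry property, this forcing also adds no bounded subsets of $\kappa_n$. Hence in $V[G_{\kappa_n}]$ the ground-model $\mu_{\kappa_{n-1}}$ is still a normal measure, since its $<\kappa_{n-1}$-sequences and regressive functions are coded by bounded subsets of $\kappa_n$. So $\mbb P_{\kappa_{n-1}}$ is literally the same poset there, and one reapplies cardinal preservation inside $V[G_{\kappa_n}]$, continuing downward.

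\textbf{Your route (bottom-up).} Starting from the smallest cardinal, you need \cref{lifting_a_measure}, including its preservation of normality. This is required because the paper's ``measure'' includes normality and \cref{fact: Prikry_in_ZF} is stated for measures. You then identify the Prikry forcing of the lifted measure $\mu^*_{\kappa_i}$ with the ground-model poset, using density of $\mbb P_{\kappa_i}$ in $\mbb P_{\mu^*_{\kappa_i}}$.

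\textbf{Comparison.} Both routes work. The paper's ordering is more economical, since it needs neither the lifting lemma nor a dense-subposet identification. Yours has the merit of being exactly the bookkeeping the paper later uses to carry the measure on $\Theta$ through $V[G_c]$.

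The step you flag as the main obstacle is in fact routine. The product lemma says that $G_{\kappa_1}\times G_{\kappa_2}$ is $V$-generic iff $G_{\kappa_1}$ is $V$-generic and $G_{\kappa_2}$ is $V[G_{\kappa_1}]$-generic. That lemma, and the fact that a dense subposet yields the same generic extensions, are both provable in \zf. So no fallback is needed. This is just as well: the ``$\kappa_1$-closure of pure extensions'' reasoning in your fallback would be suspect without $\dc$, and would have to be replaced by definable intersections via completeness and Rowbottom-type arguments.
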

\begin{proof}
The proof is a simple adaptation of \cite[Lemma 2.2]{Apter_AD_pattern}.
Suppose $c=\{\kappa_0,\dots,\kappa_n\}$ with $\kappa_0<\cdots<\kappa_n$. By  \cref{fact: Prikry_in_ZF}\eqref{item: card_pres}, $V$ and $V[G_{\kappa_n}]$ have the same class of cardinals. Moreover, the Prikry property (\cref{fact: Prikry_in_ZF}\eqref{item: PP}) ensures that $\mathbb{P}_{\kappa_n}$ does not add bounded subset of $\kappa_{n}$. In particular, $V[G_{\kappa_n}]\models``\mu_{\kappa_{n-1}}\text{ is a measure on $\kappa_{n-1}$''}$. Working in $V[G_{\kappa_n}]$, we apply  \cref{fact: Prikry_in_ZF}\eqref{item: card_pres} to $\mathbb{P}_{\kappa_{n-1}}$, and deduce that $V$ and $V[G_{\kappa_{n-1}}\times G_{\kappa_n}]$ have the same class of cardinals. The same argument shows that the forcings $\mathbb{P}_{\{\kappa_{n-2}, \kappa_{n-1}, \kappa_n\}}, \dots, \mathbb{P}_{\{\kappa_0, \dots, \kappa_n\}}=\mathbb{P}_c$ are cardinal preserving. 
\end{proof}
Let $B\subseteq \mcl M$. For   a $V$-generic filter $G\subseteq \mbb P$. We shall define the   symmetric extension $N$ given by the the small generic filters $G_\kappa$'s, $\kappa\in \mcl M$, by induction, as follows. Let $\mathcal{L}_1$ be the sublanguage of the forcing language with respect to $\mathbb{P}_B$ which contains symbols $\dot{v}, \dot{G}_\kappa$ for each $v\in V$ and $\kappa\in \mcl M$ respectively, and an unary predicate symbol $\dot{V}$ for $V$. Stipulate $N_0=\emptyset$. For $\alpha\in\ord$, define 
\[
N_{\alpha+1}=\{x\subseteq N_\alpha\mid x \text{ is definable over $\langle N_\alpha, \in , \{c\mid c\in N_\alpha\}\rangle$ by a $\mathcal{L}_1$-term}\}.
\]
If $\lambda$ is a limit ordinal, $N_\lambda=\bigcup_{\alpha<\lambda}N_\alpha$. Finally, $N\df\bigcup_{\alpha\in\ord}N_\alpha$.
 
\begin{lemma}[Apter {\cite[Lemma 2.3]{Apter_AD_pattern}}]\label{lem: Theta computed correctly}
    $\Theta^N=\Theta^V$.
\end{lemma}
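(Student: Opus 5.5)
We prove the two inequalities separately. For $\Theta^N\ge\Theta^V$ we use that $V\subseteq N$ (every $v\in V$ is named by $\dot v$, and $N$ is built from $\mathcal L_1$-terms over $N_\alpha$). Hence every surjection $f:\R\to\alpha$ in $V$ lies in $N$. Moreover, $\R^N=\R^V$, since no new reals are added. To see the latter, we argue that any real $x\in N$ is definable from ground model parameters and finitely many $\dot G_\kappa$. So $x$ lies in $V[G_c]$ for some finite $c\subseteq\mcl M$. Every $\kappa\in c$ is uncountable. By the Prikry property (\cref{fact: Prikry_in_ZF}\eqref{item: PP}), applied iteratively from the largest element of $c$ downward as in the proof of \cref{lem: same cardinals}, each $\mbb P_\kappa$ adds no bounded subsets of $\kappa$, so in particular no subsets of $\omega$. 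Thus $\alpha<\Theta^V$ implies $\alpha<\Theta^N$.

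For $\Theta^N\le\Theta^V$ we use an approximation lemma, which is the heart of the argument. Every set of ordinals $X\in N$, and in particular every surjection $f:\R\to\alpha$ coded as a set of ordinals together with reals, belongs to $V[G_c]$ for some finite $c\subseteq\mcl M$. We prove this by induction on the rank of the construction of $N$. Any $x\in N_{\alpha+1}$ is defined by an $\mathcal L_1$-formula that mentions only finitely many symbols $\dot G_\kappa$, plus the predicate $\dot V$ and parameters from $N_\alpha$, each of which by induction depends on only finitely many generics. For sets of ordinals (or reals) one shows that the defining formula can be evaluated inside $V[G_c]$ for the finite set $c$ of indices involved. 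This works because $\mbb P$ is a finite support product, so $\mbb P\cong\mbb P_c\times\mbb P_{\mcl M\setminus c}$. The standard homogeneity argument for symmetric extensions then applies: the tail Prikry forcings are homogeneous enough that statements about ground-model objects, $\dot V$, and $G_c$ are decided by conditions in $\mbb P_c$, in the extension by $\mbb P_c$. This is the symmetric-model analogue of Apter's \cite[Lemma 2.3]{Apter_AD_pattern} argument.

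Given the approximation lemma, suppose $f\in N$ maps $\R$ onto $\Theta^V$. Then $f\in V[G_c]$ for some finite $c$. Now $\mbb P_c$ is the finite product $\mbb P_{\kappa_0}\times\cdots\times\mbb P_{\kappa_n}$. Iterating \cref{fact: Prikry_in_ZF}(3) along this product, exactly as \cref{lem: same cardinals} iterates cardinal preservation, gives $\Theta^{V[G_c]}=\Theta^V$. At each stage we must check that the next measure $\mu_{\kappa_{i}}$ remains a measure after forcing with the larger coordinates. This holds because no bounded subsets of $\kappa_{i+1}$ are added (compare also \cref{lifting_a_measure} for the upward direction). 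Hence no such $f$ exists, and $\Theta^N\le\Theta^V$.

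The main obstacle is the approximation lemma, specifically the homogeneity argument. We must verify that automorphisms, or at least weak homogeneity, of each $\mbb P_\kappa$ exist in $\zf$ and fix $\dot V$ and the names $\dot G_{\kappa'}$ for $\kappa'\in c$. Our first attempt would use the fact that any two conditions of $\mbb P_\kappa$ with the same stem length have compatible direct extensions. Combined with the Prikry property, this shows that a statement with parameters in $V[G_c]$ is decided by the trivial condition of the tail forcing.
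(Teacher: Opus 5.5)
The paper gives no argument for this lemma; it simply cites Apter. Your reduction is the natural one. $V\subseteq N$ together with $\mathbb{R}^N=\mathbb{R}^V$ gives $\Theta^V\le\Theta^N$. Conversely, a surjection $f\colon\mathbb{R}\to\Theta^V$ in $N$ would have to lie in some $V[G_c]$ with $c$ finite, and there $\Theta$ is unchanged by iterating \cref{fact: Prikry_in_ZF}(3); your check that each $\mu_{\kappa_i}$ survives the larger coordinates is correct.

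There is one imprecision. $f$ is \emph{not} ``in particular'' a set of ordinals, and it cannot be coded as one, since under $\textsf{AD}$ there is no injection of $\mathbb{R}$ into the ordinals. So \cref{lem: capturing} as stated does not apply. You genuinely need capturing for arbitrary $X\subseteq V$ with $X\in N$, in the form $X=\{v\in V\mid \exists p\in G_c\,(p\Vdash\check v\in\dot X)\}\in V[G_c]$. Your homogeneity route would give exactly this, but only once homogeneity is actually established.

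That is where the real gap is: the mechanism in your last paragraph fails.
\begin{itemize}
\item Two conditions $(s,A)$ and $(t,B)$ of $\mathbb{P}_\kappa$ with $|s|=|t|$ and $s\neq t$ have no common extension at all. An element of $t\setminus s$ would have to exceed $\max s$, and an element of $s\setminus t$ would have to exceed $\max t$. Since direct extensions keep stems, such conditions never have compatible direct extensions; only conditions with the \emph{same} stem are compatible.
\item The Prikry property cannot show that the trivial condition decides a statement. It yields, for each stem $s$, some $(s,B)$ deciding $\varphi$, but different stems can decide differently. For instance, $(\emptyset,B)$ forces the least Prikry point into $B$, while $(\{\alpha\},A)$ with $\alpha\notin B$ forces the opposite.
\end{itemize}

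What you need is weak homogeneity, and it does not use the Prikry property. For $|s|=|t|$ and $\eta=\max(s\cup t)+1$, the map $(s\cup u,C)\mapsto(t\cup u,C)$, with $u\cup C\subseteq\kappa\setminus\eta$, is an isomorphism between the cones below $(s,\kappa\setminus\eta)$ and $(t,\kappa\setminus\eta)$. Unequal stem lengths are handled by first extending the shorter stem inside its measure one set.

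Acting on finitely many coordinates outside $c$, such a map replaces each affected $G_\kappa$ by a finite modification of it, which is interdefinable with $G_\kappa$ over $V$. Hence it fixes $\check v$, $\dot V$ and $\dot G_{\kappa'}$ for $\kappa'\in c$, and the definition of $N$ yields the same class. Now suppose $p\Vdash\varphi$ and $q\Vdash\neg\varphi$ with $p\restriction c=q\restriction c$. You can move a suitable extension of $p$ to a condition compatible with $q$, which is a contradiction. This gives decision by $\mathbb{P}_c$, and the rest of your argument then goes through.
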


 \begin{lemma}\label{finite_piece_included}
     Suppose that $c\subseteq A$ is finite and nonempty. Then $V[G_c]\subseteq N$.
 \end{lemma}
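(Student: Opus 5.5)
We show $V[G_c]\subseteq N$ by verifying two inclusions: $V\subseteq N$, and $G_c$ is a set in $N$. Since every element of $V[G_c]$ is the value of a $\mathbb P_c$-name in $V$, which is determined by $G_c$ and finitely many parameters from $V$, the result then follows from $N$ being a model of $\zf$ closed under the relevant definable operations. Here $c\subseteq\mcl M$ is finite, so $\mathbb P_c$ is a finite product of Prikry forcings and $G_c$ is the corresponding finite product of the $G_\kappa$, $\kappa\in c$.

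\emph{Step 1: $V\subseteq N$ and each $G_\kappa\in N$.} By induction on rank, every $v\in V$ appears at some level of the hierarchy, since $v$ is named by the constant $\dot v$ of $\mathcal L_1$. Concretely, if $v\subseteq N_\alpha$, then $v=\{x\in N_\alpha\mid x\in\dot v\}$ is definable over $N_\alpha$, so $v\in N_{\alpha+1}$. Likewise $G_\kappa\subseteq \mathbb P_\kappa\in V$, so $G_\kappa\subseteq N_\alpha$ for large $\alpha$ and $G_\kappa=\{x\in N_\alpha\mid x\in\dot G_\kappa\}\in N_{\alpha+1}$. Because $c$ is finite, $G_c=\prod_{\kappa\in c}G_\kappa$ is obtained in $N$ by finitely many pairing and product operations. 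One also checks that the $N_\alpha$ hierarchy yields a transitive model of $\zf$ containing these sets. This is the usual $L(V\cup\{G_\kappa\mid\kappa\in\mcl M\})$-style construction, as in Apter.

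\emph{Step 2: evaluating names.} For each $\mathbb P_c$-name $\tau\in V$, its interpretation is given by the recursion
\[
\tau^{G_c}=\{\sigma^{G_c}\mid \exists p\in G_c\,(\langle\sigma,p\rangle\in\tau)\}.
\]
This is a definition by $\in$-recursion using only the parameters $\tau\in V\subseteq N$ and $G_c\in N$. Since $N\models\zf$, the recursion can be carried out inside $N$, and it is absolute between $N$ and $V[G]$. So $\tau^{G_c}\in N$, and therefore $V[G_c]=\{\tau^{G_c}\mid\tau\in V^{\mathbb P_c}\}\subseteq N$.

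\emph{Main obstacle.} The delicate point is justifying that $N$, as defined via $\mathcal L_1$-definability, really satisfies enough of $\zf$ (replacement, in particular) to run the name-evaluation recursion internally. Alternatively, we can avoid this by showing directly, by induction on the name-rank of $\tau$, that $\tau^{G_c}\in N_{\alpha}$ for some $\alpha$ depending on $\tau$. Suppose all $\sigma^{G_c}$ with $\sigma\in\operatorname{dom}(\tau)$ lie in $N_\alpha$; such an $\alpha$ exists by a bounding argument carried out in $V[G]$. Then $\tau^{G_c}$ is defined over $N_\alpha$ by an $\mathcal L_1$-formula mentioning $\dot\tau$ and $\dot G_\kappa$ for $\kappa\in c$. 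This is where finiteness of $c$ is used: it keeps the defining formula finite. I would take this direct level-by-level route.
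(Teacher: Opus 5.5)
Your Steps 1--2 are correct and are the paper's argument. The paper notes that each $G_{\kappa_i}$ belongs to $N$ by construction and concludes $V[G_c]=V[G_{\kappa_0},\dots,G_{\kappa_n}]\subseteq N$. Like you, it tacitly uses that $N$ is a transitive model of \zf{} containing $V$, the standard $L(A)$-style fact proved by reflection and bounding in $V[G]$.

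The direct level-by-level route you say you would actually take does not work as you set it up. Knowing only that each value $\sigma^{G_c}$, for $\sigma\in\operatorname{dom}(\tau)$, lies in $N_\alpha$ does not give a single $\mathcal L_1$-formula over $N_\alpha$ defining $\tau^{G_c}$. The formula must express ``$x=\sigma^{G_c}$'' uniformly in $\sigma$, and that relation is itself defined by recursion on $\sigma$. You cannot name the infinitely many values by constants in one formula, and nothing in your induction hypothesis makes the recursion available inside the bare level $N_\alpha$.

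The fix is to carry the recursion along: prove by induction that some evaluation map whose domain contains $\tau$ is an element of $N$. Consider the property ``$e$ is a function on a set of names closed under subnames, and $e(\rho)=\{e(\rho')\mid\exists p\in G_c\,(\langle\rho',p\rangle\in\rho)\}$ for every $\rho\in\operatorname{dom}(e)$.'' It is $\Delta_0$ in $e$ and the finitely many $G_\kappa$, $\kappa\in c$. Hence it is absolute to the transitive levels $N_\alpha$, and any such $e$ agrees with the true evaluation on its domain. Now $x\in\tau^{G_c}$ iff there are $\langle\sigma,p\rangle\in\dot\tau$ with $p\in G_c$ and such an $e$ with $e(\sigma)=x$. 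This is a legitimate definition over a large enough $N_\alpha$, and a map for $\tau$ is then obtained by a union plus one new pair.

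Alternatively, just establish $N\models\zf$ and use your Step 2, which is what the paper implicitly does.
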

 \begin{proof}
Suppose $c=\{\kappa_0,\dots,\kappa_n\}$ with $\kappa_0<\cdots<\kappa_n$. By construction, each $G_{\kappa_i}$ lives in $N$ and so $V[\{G_{\kappa_0},\dots, G_{\kappa_n}\}]\s N$. Since $V[\{G_{\kappa_0},\dots, G_{\kappa_n}\}]$ is trivially $V[G_c]$, the conclusion follows.    
 \end{proof}
The following key lemma is due to Apter.    
\begin{lemma}[Apter {\cite[Lemma 2.1]{Apter_AD_pattern}}]\label{lem: capturing}
    Let $X$ be a set of ordinals in $N$. Then there is a finite set $c\s A$, such that $X\in V[G_c]$.
\end{lemma}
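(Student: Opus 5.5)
We argue by a symmetry (homogeneity) argument in the style of symmetric extensions. Fix $X\in N$ with $X\s\ord$. By construction of the hierarchy $\langle N_\alpha\mid\alpha\in\ord\rangle$, $X$ is definable over some $N_\alpha$ by an $\mathcal L_1$-term. Such a term mentions only finitely many parameters $\dot G_\kappa$, together with ground-model parameters and the predicate $\dot V$. Let $c\s\mcl M$ be the finite set of $\kappa$ whose symbols $\dot G_\kappa$ occur in the defining term, enlarged if needed by the finitely many indices hidden in the parameters from $N_\alpha$. The last point needs an induction on $\alpha$: every element of $N_\alpha$ has a hereditarily finite support of such indices. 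Then $X=\{\xi\mid N\models\varphi(\xi,\vec v,\dot G_{\kappa})_{\kappa\in c}\}$ for a formula $\varphi$ that can be rewritten in the forcing language of $\mbb P$ using only the names $\check v$ and $\dot G_\kappa$ for $\kappa\in c$.

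We then show that $X=\{\xi\mid \exists p\in G_c\;(p\Vdash^*\varphi(\check\xi))\}$, where $\Vdash^*$ is the forcing relation of the quotient $\mbb P_{\mcl M\setminus c}$ over $V[G_c]$. The key step is a homogeneity fact. For $q\in\mbb P$, decompose $q=(q\restriction c, q\restriction(\mcl M\setminus c))$. Suppose $q$ and $q'$ agree on $c$ and their parts outside $c$ are both conditions of the finite support product. We need that they cannot force contradictory values of $\varphi(\check\xi)$. To see this, we use automorphisms, or rather a weakening trick suited to Prikry forcing. Given $r$ extending $q\restriction(\mcl M\setminus c)$ and $r'$ extending $q'\restriction(\mcl M\setminus c)$, the finite stems on coordinates outside $c$ may differ. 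Since $\varphi$ mentions only $\dot G_\kappa$ for $\kappa\in c$ and $\dot V$, the truth of $\varphi$ in $N$ is invariant under finite modifications of the Prikry sequences on coordinates outside $c$. We therefore choose, for each coordinate $\lambda\notin c$ in the support, a permutation of $\lambda$ that maps the finite stem of $r(\lambda)$ to that of $r'(\lambda)$, fixes a measure one set, and hence preserves $\mu_\lambda$. This induces an automorphism $\sigma$ of $\mbb P_\lambda$ with $\sigma r(\lambda)$ compatible with $r'(\lambda)$. The automorphism $\sigma$ fixes every $\dot G_\kappa$ for $\kappa\in c$ and every check name. It moves $\dot G_\lambda$ only to a name for a generic differing in finitely many points, and it preserves the set $\{\dot G_\lambda\mid\lambda\in\mcl M\}$ up to a finite change. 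This change does not alter $N$, because $N$ is determined by $V$ and the family of these generics modulo finite change. The argument then yields contradictory forced statements, which is impossible. The definition of $X$ is thus absolute to $V[G_c]$, as the weak homogeneity of the tail $\mbb P_{\mcl M\setminus c}$ gives, and so $X\in V[G_c]$.

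I expect the main obstacle to be the verification that $N$ is really invariant under these finite modifications. This amounts to showing that $N$ is unchanged if one replaces $G_\lambda$ by a generic with the same tail. We would handle it by noting that $V[G_\lambda]=V[G'_\lambda]$ whenever $G_\lambda$ and $G'_\lambda$ differ by finitely many points, since each is definable from the other and a finite set of ordinals in $V$. Hence the $\mathcal L_1$-definable hierarchies coincide after reinterpreting the constants $\dot G_\lambda$. A secondary difficulty is making ``forcing over $V[G_c]$ with the tail'' legitimate in $\zf$. Here we would use that $\mbb P\cong\mbb P_c\times\mbb P_{\mcl M\setminus c}$, so that $G_{\mcl M\setminus c}$ is $V[G_c]$-generic by the product lemma, which holds in $\zf$.
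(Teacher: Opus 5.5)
Your route is the standard automorphism (homogeneity) argument, which is how Apter proves his Lemma~2.1: the paper gives no proof of its own, only the citation, and it invokes the same argument again to get $p\Vdash\dot f=\dot h$ in the strong regularity proof, so you are following the intended approach. Two details need the usual repair: first extend the conditions so that on each coordinate $\lambda\notin c$ the two stems have equal length (otherwise no permutation maps one onto the other), and note that a permutation of $\lambda$ moving only ordinals below some $\beta<\lambda$ is not an automorphism of $\mathbb{P}_\lambda$ itself (the image can violate $\max(s)<\min(A)$) but only of the dense subset of conditions $(s,A)$ with $\min(A)\geq\beta$, which suffices.
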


\begin{lemma} 
    $N$ is a cardinal preserving outer model of $V$.
\end{lemma}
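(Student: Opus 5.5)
The plan has two parts. First, $N$ is an outer model of $V$. Second, every cardinal of $V$ remains a cardinal in $N$. Both reduce to facts already stated about the finite pieces $V[G_c]$.

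\emph{Outer model.} We need $V\subseteq N\subseteq V[G]$ and $N\models\zf$. The inclusion $V\subseteq N$ follows from \cref{finite_piece_included}: take any nonempty finite $c$, so $V\subseteq V[G_c]\subseteq N$. Alternatively, argue by induction on rank, since every $v\in V$ is named by the constant $\dot v$ of $\mathcal L_1$. The inclusion $N\subseteq V[G]$ holds because each $N_\alpha$ is defined inside $V[G]$. That $N\models\zf$ is the standard fact for models built as $L(V\cup\{G_\kappa\mid\kappa\in\mcl M\})$-style hierarchies, namely $N=\bigcup_\alpha N_\alpha$ with definable-subset successor steps. Extensionality and foundation are inherited from $V[G]$, the construction being transitive. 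Pairing, union, infinity and separation come from the definability clause in the successor step. Replacement and power set follow by the usual reflection argument, as for $L(X)$. I would simply cite this as routine, following Apter.

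\emph{Cardinal preservation.} Since $V\subseteq N$, every cardinal of $N$ is a cardinal of $V$, so only the converse needs proof. Suppose $\lambda$ is a cardinal of $V$ but not of $N$. Then in $N$ there is $\beta<\lambda$ and a surjection $f:\beta\to\lambda$. Code $f$ as a set of ordinals, for instance $X=\{\Gamma(\xi,f(\xi))\mid\xi<\beta\}$, where $\Gamma$ is Gödel pairing, which is absolute and lies in $V$. By \cref{lem: capturing} there is a finite $c\subseteq\mcl M$ with $X\in V[G_c]$, so $f\in V[G_c]$ as well. Then $\lambda$ is not a cardinal in $V[G_c]$. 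This contradicts \cref{lem: same cardinals}, which says $V$ and $V[G_c]$ have the same cardinals. (If $c$ is empty, then $X\in V$ and there is nothing to prove.)

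\emph{Main obstacle.} The substantive input is \cref{lem: capturing}, which is taken from Apter. The only delicate point remaining is checking that $N$ genuinely satisfies $\zf$, in particular replacement and power set, given its inductive definition via $\mathcal L_1$-terms. I would handle this by showing that each $N_\alpha$ is a set in $V[G]$ and that the hierarchy is continuous. A Löwenheim–Skolem style reflection within $V[G]$ then closes $N$ under the required operations, exactly as in the proof that $L(A)\models\zf$.
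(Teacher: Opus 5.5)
Your proposal is correct and follows the paper's proof: you code a collapsing surjection in $N$ as a set of ordinals, capture it in some $V[G_c]$ via \cref{lem: capturing}, and contradict \cref{lem: same cardinals}. The Gödel-pairing step and your sketch of $V\subseteq N\subseteq V[G]$ with $N\models\zf$ fill in details the paper leaves implicit; they do not change the route.
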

\begin{proof}
    Suppose, aiming for a contradiction, that there is a cardinal $\delta$ such that $N\models``\delta\text{ is not a cardinal''}$. Working in $N$, let $f\colon\gamma\rightarrow\delta$ be a witness for $|\delta|^N=\gamma<\delta$. By  \cref{lem: capturing}, $f\in V[G_c]$, for some finite $c=\{\kappa_0,\dots,\kappa_n\}\s A$ with $\kappa_0<\cdots<\kappa_n$. But this contradicts the fact that $V$ and $V[G_c]$ have the same class of cardinals by  \cref{lem: same cardinals}.
\end{proof}
\begin{lemma}
$\cof{\kappa}^N=\omega$ for all $\kappa\in \mcl M$.
\end{lemma}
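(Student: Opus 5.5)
Fix $\kappa\in\mcl M$. The plan is to exhibit a cofinal $\omega$-sequence in $\kappa$ that lies in $N$; the natural candidate is the Prikry sequence added by $G_\kappa$.

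\textbf{Defining the sequence.} Working in $V[G]$, set
\[
x_\kappa\df\bigcup\{s_p\mid p\in G_\kappa\}.
\]
Since $G_\kappa$ is a symbol of the language $\mathcal L_1$, both $G_\kappa$ and $x_\kappa$ belong to $N$. Alternatively, we may invoke \cref{finite_piece_included} with $c=\{\kappa\}$, which gives $V[G_\kappa]\s N$ and hence $x_\kappa\in N$.

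\textbf{Order type $\omega$.} Any two conditions in $G_\kappa$ are compatible, so their stems are end-extensions of one another. Hence $x_\kappa$ is a subset of $\kappa$ whose finite initial segments are exactly the stems $s_p$, $p\in G_\kappa$. A density argument shows that $x_\kappa$ is infinite: given $(s,A)$ and $n<\omega$, shrink $A$ and add $n$ elements of $A$ above $\max(s)$ to the stem. So $\mathrm{otp}(x_\kappa)=\omega$.

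\textbf{Cofinality in $\kappa$.} Fix $\alpha<\kappa$. The set
\[
D_\alpha\df\{p\in\mbb P\mid \max(s_{p(\kappa)})>\alpha\}
\]
is dense. Indeed, given $p$, note that $A_{p(\kappa)}\setminus(\alpha+1)\in\mu_\kappa$, because $\mu_\kappa$ is $\kappa$-complete and non-principal, hence uniform. Choose $\beta$ in this set with $\beta>\max(s_{p(\kappa)})$, append $\beta$ to the stem, and shrink the measure-one set to lie above $\beta$. Genericity of $G$ therefore gives an element of $x_\kappa$ above $\alpha$. Consequently $x_\kappa$ is cofinal in $\kappa$, and its increasing enumeration, which is definable in $N$, witnesses $\cof{\kappa}^N\le\omega$.

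\textbf{Equality.} Finally, $\kappa$ remains a cardinal in $N$ by the previous lemma, and it is uncountable. Hence its cofinality cannot be finite, and $\cof{\kappa}^N=\omega$.

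The only delicate point is making sure $x_\kappa$ really belongs to $N$ rather than merely to $V[G]$. This is handled by the construction of $N$, which includes $\dot G_\kappa$, or equivalently by \cref{finite_piece_included}. The density arguments themselves go through in $\zf$ and use no choice.
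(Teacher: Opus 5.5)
Your proof is correct and is essentially the paper's argument: the Prikry sequence read off from $G_\kappa$ lies in $V[G_\kappa]\subseteq N$ by \cref{finite_piece_included} and is cofinal in $\kappa$ of order type $\omega$. The paper simply asserts this, while you supply the routine density and order-type checks; your appeal to cardinal preservation for the lower bound is harmless but unnecessary, since $\kappa$ is a limit ordinal and so has infinite cofinality anyway.
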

\begin{proof}
Let  $\kappa\in A$. $V[G_\kappa]\models``\cof{\kappa}=\omega\text{''}$, as witnessed by a sequence $\vec{\kappa}=\langle\kappa_n\mid n<\omega\rangle\in V[G_\kappa]$ definable from $G_\kappa$. Since $\vec{\kappa}\in N$ by \cref{finite_piece_included}, the conclusion follows. 
\end{proof} 
\begin{proposition}\label{no-measure-below}
There is no measurable cardinal below $\Theta$ in $N$.
\end{proposition}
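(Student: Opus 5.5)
We argue by contradiction: suppose that in $N$ some cardinal $\delta<\Theta$ carries a measure. A measurable cardinal is regular, so $\delta$ is regular in $N$. Since $N$ is a cardinal-preserving outer model of $V$, $\delta$ is a cardinal in $V$. We then split into cases according to what $\delta$ is in $V$.

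\emph{Case 1: $\delta$ is regular in $V$.} Under $\ad_{\R}$, a cardinal below $\Theta$ is regular iff it is measurable, so $\delta\in\mcl M$. The preceding lemma gives $\cof{\delta}^N=\omega$. Hence $\delta$ is not regular in $N$ (recall $\delta>\omega$: an ultrafilter on $\omega$ that is $\omega$-complete, nonprincipal and normal cannot exist, so a measurable cardinal is uncountable). This contradicts the regularity of $\delta$ in $N$.

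\emph{Case 2: $\delta$ is singular in $V$.} Fix a cofinal map $f\colon\gamma\to\delta$ in $V$ with $\gamma<\delta$. Since $V\subseteq N$, we have $f\in N$, so $\delta$ is singular in $N$, again a contradiction.

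The one point needing care is $V\subseteq N$. The $\mcl L_1$-terms include the symbols $\dot v$ for $v\in V$ and the predicate $\dot V$. Alternatively, \cref{finite_piece_included} already gives $V\subseteq V[G_c]\subseteq N$ for any nonempty finite $c$.

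I expect the only subtle step to be Case 1, specifically confirming that $\delta$ regular in $V$ forces $\delta\in\mcl M$. This is exactly the remark that under $\ad_{\R}$ every regular cardinal below $\Theta$ is measurable, since $\mcl M$ is defined as the set of measurable cardinals below $\Theta$. A possible worry is that $\delta$ could be measurable in $N$ yet have a different status in $V$. The dichotomy above handles this: any $V$-singularity persists upward into $N$, and any $V$-regularity below $\Theta$ is destroyed in $N$ by the Prikry sequence added at $\delta$. So no measure on $\delta$ in $N$ needs to be analysed directly. Regularity alone rules out every candidate, so in fact $N$ has no regular uncountable cardinal below $\Theta$, which is stronger than the proposition.
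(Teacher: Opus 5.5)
Your proof is correct and is essentially the paper's argument. Both reduce to $\delta\in\mcl M$ via the $\ad_{\R}$ equivalence of regularity and measurability below $\Theta$, then use $V[G_\delta]\subseteq N$ (equivalently, the preceding lemma) to get $\cof{\delta}^N=\omega$. Your Case~2 just makes explicit a step the paper leaves implicit when it passes in one line to $\lambda\in\mcl M$: regularity in $N$ implies regularity in $V$ because $V\subseteq N$.

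The one ingredient the paper cites that you use only tacitly is $\Theta^N=\Theta^V$. It is what lets ``below $\Theta$'' evaluated in $N$ be read as below $\Theta^V$, so that the $\ad_{\R}$ fact in $V$ applies.
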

\begin{proof}
    It follows from the fact that $\mathbb{P}$ destroys the measurability of each $\kappa\in A$. Concretely, suppose, aiming for a contradiction, that there is $\lambda<\Theta^N$ such that $N\models``\lambda\text{ is a measurable cardinal''}$. Since $\Theta^N=\Theta^V$ by  \cref{lem: Theta computed correctly} and by $\ad_{\mathbb{R}}$ every regular cardinal below $\Theta$ in $V$ is measurable, it must be the case that $\lambda\in A$. Therefore, $\mathbb{P}_{\lambda}\Vdash``\cof{\lambda}=\omega\text{''}$. But $V[G_\lambda]\s N$ by  \cref{finite_piece_included} and so $N\models``\cof{\lambda}=\omega\text{''}$, contradicting the fact that $\lambda$ is measurable in $N$.
\end{proof}
Now in $N$, every uncountable cardinal below $\Theta$ is singular and nonmeasurable, we want to show that $\Theta$ is strongly regular and measurable in $N$. These are enough to conclude that $\Theta$ is the minimal uncountable regular cardinal, which is strongly regular and measurable. Hence \cref{mainthm}.

\begin{proposition}
    $\Theta$ is strongly regular in $N$.
\end{proposition}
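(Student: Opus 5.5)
We argue by contradiction and reduce to a statement about $V$. Assume that in $N$ there are $\alpha<\Theta$ and a function $f\colon\mcl P(\alpha)^N\to\Theta$ whose range is unbounded in $\Theta$. The strategy is to turn $f$ into an unbounded map defined on a small set of ordinals or reals that lives in some $V[G_c]$ with $c$ finite. There the bound on $\Theta$ can be inherited from $V$.

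\emph{Step 1 (pass to a set of ordinals).} Since $\Theta^N=\Theta^V$ by \cref{lem: Theta computed correctly}, the set $R=\rang{f}$ is a cofinal subset of $\Theta$. It is a set of ordinals in $N$. By \cref{lem: capturing} there is a finite $c\s\mcl M$ with $R\in V[G_c]$, and hence $V[G_c]\models\cof{\Theta}\le\operatorname{otp}(R)$. This alone bounds nothing, so we must control the size of $R$. Since $R$ is the image of $\mcl P(\alpha)^N$, the plan is to show that $\mcl P(\alpha)^N$ is a surjective image of $\R\times\gamma$ for some $\gamma<\Theta$. If so, then $R$, and so $\Theta$, is in $N$ a surjective image of $\R\times\gamma$. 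Because $\Theta=\Theta^N$ is not a surjective image of $\R$ in $N$, we want this to force a contradiction.

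\emph{Step 2 (bounding $\mcl P(\alpha)^N$).} Every $x\in\mcl P(\alpha)^N$ lies in some $V[G_d]$ with $d$ finite, by \cref{lem: capturing}. By the Prikry property, $\mbb P_\kappa$ adds no bounded subsets of $\kappa$. Hence only the factors with $\kappa\le\alpha$ matter, and we may take $d\s\mcl M\cap(\alpha+1)$. Each such $x$ is determined by a finite tuple of Prikry sequences $\langle G_\kappa\mid\kappa\in d\rangle$, which are $\omega$-sequences of ordinals below $\alpha+1$, together with a $\mbb P_d$-name in $V$ for a subset of $\alpha$. By the Prikry property, these names can be taken to be \emph{nice}: a name is coded by a function from $\alpha$ times the finite stems into $\mcl P(\mcl P(\alpha+1))$, or, more cleanly, by a subset of $\alpha\times[\alpha+1]^{<\omega}$ after shrinking measure-one sets. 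In $V$, under $\ad_\R$, the power set of any $\beta<\Theta$ is a surjective image of $\R$ uniformly: fix the Moschovakis/Wadge coding. The Prikry sequences form the set $([\alpha+1]^\omega)^{<\omega}$, and under $\ad$ this is a surjective image of $\R$. These two facts together give, in $N$, a surjection from $\R\times\R$ (essentially from $\R$) onto $\mcl P(\alpha)^N$. For this we must verify that $\R^N$ still maps onto $\R^V$, which is immediate since $\R^V\s\R^N$.

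\emph{Step 3 (conclusion and main obstacle).} Composing with $f$ gives a surjection in $N$ from a surjective image of $\R^N$ onto a cofinal subset of $\Theta$. We need to rule out such a cofinal map from $\R$ in $N$. Pull it back to $V[G_c]$ for suitable finite $c$ using \cref{lem: capturing}, coding the map as a set of ordinals via a wellorderable coding of its graph restricted to a wellordered skeleton. In $V[G_c]$, $\Theta$ remains regular, in fact not the surjective image of $\R$ (Apter's proof of \cref{lem: Theta computed correctly}). So no such cofinal map exists, a contradiction. The main obstacle is Step 2: making the name-counting uniform without choice. Each $x$ comes with many possible names and stems, and we need a definable surjection, so no choices of names are allowed. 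I would handle this by letting the surjection range over \emph{all} pairs (code of a nice name, tuple of Prikry sequences) and evaluating whenever this is meaningful. This is a definable map in $N$, since the $G_\kappa$ are available there. It is onto by \cref{lem: capturing} together with the Prikry-property reduction to nice names.
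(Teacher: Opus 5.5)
There is a genuine gap in Step 2. It is not about making the name-counting uniform: for most $\alpha$ the statement you are trying to prove there is false.

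\textbf{Why Step 2 fails.} The Prikry sequences $\vec\kappa$ are not in $V$, so the $\ad$-surjection of $\R$ onto $([\alpha+1]^\omega)^{<\omega}$ computed in $V$ never reaches them. This cannot be repaired inside $N$. We have $\R^N=\R^V$, since no bounded subsets are added. The Prikry sequence through $\omega_1$ is coded by no real, since such a real would put it in $V$, where $\omega_1$ is regular. Hence $\mathsf{AC}_\omega(\R)$ fails in $N$, and the map $x\mapsto\{|x_n|:n<\omega\}$ misses the new $\omega$-sequences.

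Indexing instead by $d\in[\mcl M\cap(\alpha+1)]^{<\omega}$ and evaluating names by $G_d$ does not help either. That needs the assignment $d\mapsto G_d$ in $N$. Coded as the set of ordinals $\{(\kappa,\beta):\kappa\in\mcl M\cap(\alpha+1),\ \beta\in\vec\kappa\}$, it would by \cref{lem: capturing} lie in some $V[G_c]$ with $c$ finite. But $G_\kappa\notin V[G_c]$ for $\kappa\notin c$.

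In fact, once $\mcl M\cap\alpha$ is infinite, no surjection $\sigma\colon\R\times\gamma\to\mcl P(\alpha)^N$ exists in $N$ at all. Code it as $\{(x,\eta,\beta):\beta\in\sigma(x,\eta)\}\subseteq V$. By the homogeneity argument behind \cref{lem: capturing} (the same one the paper invokes for $f$ itself), this set lies in some $V[G_c]$. Then every subset of $\alpha$ in $N$ lies in $V[G_c]$, yet the Prikry sequence of any $\kappa\in(\mcl M\cap\alpha)\setminus c$ does not. So Steps 1--2 only work when $\mcl M\cap(\alpha+1)$ is finite, where the single parameter $G_d$ suffices. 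The vague ``wellordered skeleton'' coding in Step 3 is then moot.

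\textbf{The missing idea.} Bound $\rang{f}$ from $V$, quantifying over \emph{conditions} instead of parametrizing the generics inside $N$. This is what the paper does:
\begin{enumerate}
\item Fix $p$ forcing $\dot f\colon\mcl P(\alpha)^N\to\Theta$, with finite support $d$, and put $d'=d\setminus(\alpha+1)$.
\item By a capturing-style argument, $f$ is determined by the coordinates in $(\mcl M\cap(\alpha+1))\cup d'$.
\item In $V$, collect all $\gamma$ such that some condition with support in $(\mcl M\cap(\alpha+1))\cup d'$ forces $\dot f(\tau^*_{c,d'})=\check\gamma$ for some canonical name $\tau$ of a subset of $\alpha$.
\end{enumerate}
This set lies in $V$ and contains $\rang{f}$. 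It is the image of $\mcl P(\xi)^V$ for a suitable $\xi<\Theta$, hence of $\R$, so it is bounded by the regularity of $\Theta$ in $V$. Restricting the coordinates above $\alpha$ to the fixed finite set $d'$ is what makes the relevant names and conditions codable by subsets of a single $\xi<\Theta$.
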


\begin{proof}
Suppose $f \in N$ is a function from $(\mathcal{P}(\alpha))^N $ into $\Theta$ for some $\alpha < \Theta$. We will show that $f$ is bounded in $\Theta$. First we show that there is a finite $c \subset \mathcal{M} \setminus \alpha +1 $ such that $f$ is in $V[\{ G_\beta : \beta \in \mathcal{M} \cap (\alpha +1 \cup c) \} ]$.
 Let $\dot{f}$ be a $\mathbb{P}$-name for $f$ and let $p$ be a condition with support $d$ such that $p \Vdash "\dot{f} \text{ is a function from} (\mathcal{P}(\alpha))^N \text{ to } \check{\Theta}"$. Let $d' = d \cap \mathcal{M} \setminus \alpha +1$. We will identify conditions in $\mathbb P$ with their nontrivial components.

 By \cref{lem: capturing} and the proof of \cref{lem: same cardinals} every subset of $\alpha$ in $N$ is in \linebreak $V[\{ G_\beta : \beta \in \mathcal{M} \cap \alpha +1 \}]$.
For every finite $c \subset \mathcal{M} \cap \alpha +1$ and every $\mathbb{P}_c$-name $\tau$ for a subset of $\alpha$, there is a canonical name $\tau^*$ consisting of all pairs $(\check{\beta}, p)$ such that $\beta < \alpha$, $p \in \mathbb{P}_c$, and $p \Vdash \check{\beta} \in \tau$.
For such $\tau$ and finite $e \subset \mathcal{M} \setminus \alpha +1$, we let $\tau_{c,e}^*$ be the $\mathbb P_{c \cup e}$-name $\{ (\check{\beta}, p) : (\check{\beta}, p \rest c ) \in \tau^*\} $. 
Clearly for every $\beta < \alpha$, every $p \in \mathbb{P}_{c \cup e}$, and every $\mathbb P_c$-name $\tau$ for a subset of $\alpha$, we have $p \Vdash \check{\beta} \in \tau_{c,e}^*$ if and only $p \rest c \Vdash \check{\beta} \in \tau$. 
Hence, for every finite $e \subset \mathcal{M} \setminus \alpha +1$,  we have 
\[\mathcal{P}(\alpha) \cap N = \bigcup_{c \in [\mathcal{M} \cap \alpha +1]^{< \omega}}(\dot{A}_c)_{G_{c \cup e}},\]
where 
\begin{align*}
    \dot{A}_c = \{ (\tau_{c,e}^*, p) : & \, \tau \text{ a }\mathbb{P}_c\text{-name for a subset of } \alpha  \land p \in \mathbb{P}_{c \cup e} \}.
\end{align*}

Now we prove the claim made in the first paragraph. For all finite $c \subset \mathcal{M} \cap \alpha+1$, $\gamma < \Theta$, and $\mathbb{P}_c$-name $\tau$ for a subset of $\alpha$, let $\pi^{c,\tau,\gamma}$ be a $\mathbb P_{c \cup d'}$-name such that for any $\mathbb P_{c \cup d'}$-generic $H$, $(\pi^{c,\tau, \gamma})_H = ((\tau_{c,d'}^*)_{H}, \gamma)$. Let 
\begin{align*} 
\dot{h} = \{ (\pi^{c,\tau, \gamma}, q):  & \, c \subset \mathcal{M} \cap \alpha +1 \text{ finite } \land \tau \text{ a }\mathbb{P}_c\text{-name for a subset of } \alpha \\
& \, \land q \leq p \text{ with support } c \cup d' \text{ is such that } q \Vdash \dot{f}( \tau_{c,d'}^*) = \check{\gamma} \}.
\end{align*}
Arguing as in the proof of \cite[Lemma 2.1]{Apter_AD_pattern}, we can show that $p \Vdash \dot{f} = \dot{h}$. Consequently, $f \in V[\{ G_\beta : \beta \in \mathcal{M} \cap (\alpha +1 \cup d')\}]$.

Now we show that $f$ is bounded.
For any finite $c= \{ \kappa_1< \dots< \kappa_n\} \subset \mathcal{M}$, and any $(s_1, \dots, s_n)$ with $s_i \in [\kappa_i]^{< \omega}$, $i \leq n$, the conditions $((s_1, D_1), \dots, (s_n,D_n))$ and $((s_1, D_1'), \dots, (s_n,D_n'))$ of $\mathbb{P}_c$ are compatible for any $D_i, D_i' \in \mu_{\kappa_i}$ such that $D_i, D_i' \subset \kappa_i \setminus (\max(s_i) +1)$, $i \leq n$.
Hence, for any finite $c = \{\kappa_1, \dots, \kappa_n\} \subset \mathcal{M} \cap \alpha +1$ and $\mathbb P_c$-name $\tau$ for a subset of $\alpha$, the canonical name $\tau^*$ can be coded as 
\begin{align*}
\{ (\beta, (s_1, \dots, s_n)) : & \, \beta < \alpha \land (\bigwedge_{i \leq n} s_i \in [\kappa_i]^{< \omega}) \land \\
& \exists D_1, \dots, D_n \, ((\bigwedge_{i \leq n} D_i \in \mu_{\kappa_i}) \land ((s_1, D_1), \dots, (s_n,D_n) ) \Vdash \check{\beta} \in \tau )\}.
\end{align*}
Consequently, each $\tau^*$ can be coded as subset of $\alpha$. Every condition of $\mathbb P_{c \cup d'}$ for finite $c \subset \mathcal{M} \cap \alpha+1$ can de coded as a subset of $\xi := \max(d')$. 
Hence, we can define in $V$ a function $g: \mathcal{P}(\xi) \to \Theta$ by $g(B) = \gamma$ if $B$ codes a pair $(\tau^*,p)$ such that there are a finite $c \subset \mathcal{M} \cap \alpha +1$, a $\mathbb P_c$-name $\tau$ for a subset of $\alpha$, and $p \in \mathbb P_{c \cup d'}$ such that $p \Vdash \dot{f} (\tau_{c,d'}^*) = \check{\gamma}$. Otherwise, we let $g(B) = 0$. If $f$ is unbounded in $\Theta$, then $g: \mathcal{P}(\xi) \to \Theta$ is unbounded in $\Theta$. But $\mathrm{AD}$ implies that there is a surjection from $\mathbb R$ onto $P(\xi)$. Hence, if $f$ is unbounded, then there is in $V$ an unbounded map from $\mathbb R$ into $\Theta$, which contradicts the assumption $\Theta_{\sf meas}$.
\end{proof}

\begin{proposition}
Suppose that  $\mu$ is an $\mathbb{R}$-complete normal ultrafilter on $\Theta$. Then $\mu$ lifts up to $N$. In particular, $\Theta$ is measurable in $N$. 
\end{proposition}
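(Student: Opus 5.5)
We show that the lift $\mu^{*G}$, computed inside $N$ as $\{x\s\Theta\mid x\in N,\ \exists y\in\mu\,(y\s x)\}$, is a normal measure on $\Theta$ in $N$. Since $\mu\in V\s N$, this filter is definable in $N$. Non-principality, uniformity and being a filter are inherited directly from $\mu$. What remains is ultrafilterness, $\Theta$-completeness and normality.

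\textbf{Reduction to a finite sub-forcing.} Every $X\in N$ with $X\s\Theta$ is a set of ordinals. By \cref{lem: capturing} there is a finite $c\s\mcl M$ with $X\in V[G_c]$. Likewise, any sequence $\langle X_i\mid i<\gamma\rangle\in N$ with $\gamma<\Theta$, or any regressive $F\colon\Theta\to\Theta$ in $N$, can be coded as a single set of ordinals. It therefore also lies in some $V[G_c]$ with $c$ finite. So it suffices to show, for each finite $c\s\mcl M$, that $\mu^{*G_c}$ is a normal measure on $\Theta$ in $V[G_c]$. The lift computed in $N$ agrees with this lift on $\mathcal P(\Theta)\cap V[G_c]$, because the witnesses $y\in\mu$ lie in $V$ in both cases.

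\textbf{The finite case.} Here I mimic \cref{lifting_a_measure}, treating $\mathbb P_c$ as one forcing of size $<\Theta$ rather than iterating over the coordinates.

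\emph{Ultrafilter.} Let $\dot X$ be a $\mathbb P_c$-name and $p\in G_c$ with $p\Vdash\dot X\s\Theta$. For each $\alpha<\Theta$, use the Prikry property of $\mathbb P_c$ (proved coordinatewise as in \cref{fact: Prikry_in_ZF}, or via \cref{lem: same cardinals}'s iteration) to fix stems $\vec s$. For each stem, let $A^{\vec s}_\alpha$ be the set of $\alpha$ for which some condition with stem $\vec s$ extending $p$ forces $\check\alpha\in\dot X$. The stems form a well-orderable set of size $<\Theta$, namely $\bigl|\prod_{\kappa\in c}[\kappa]^{<\omega}\bigr|<\Theta$. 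Then:
\begin{itemize}
\item Put $Y^{\vec s}=\{\alpha\mid (\vec s,\text{some measure-one sets})\Vdash\alpha\in\dot X\}$.
\item By $\Theta$-completeness of $\mu$ (indeed $\mbb R$-completeness), there is a $\mu$-large $Z$ on which, for every stem $\vec s$, either $Z\s Y^{\vec s}$ or $Z\cap Y^{\vec s}=\emptyset$.
\item There is no choice problem in picking the measure-one sets: by the compatibility observation in the previous proof, the set of decided $\alpha$ depends only on the stem.
\item A density argument then shows that either $Z\s X$ or $Z\cap X=\emptyset$, according to the stem of $G_c$ that decides things.
\end{itemize}

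\emph{Completeness and normality.} For a sequence $\langle X_i\mid i<\gamma\rangle$, or a regressive $F$, apply the same stem-analysis to a name $\dot F$. For each stem $\vec s$ and each $\alpha$, the value $F(\alpha)$ forced by conditions with stem $\vec s$ is determined when it exists. Normality of $\mu$ in $V$ gives a $\mu$-large set on which each such partial function is constant. Intersecting over the $<\Theta$ many stems using $\mbb R$-completeness (the stems are coded by reals, or at least form a set that is a surjective image of $\mbb R$) yields $Z\in\mu$ on which $F$ is forced constant.

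\textbf{Main obstacle.} The main obstacle is the bookkeeping in $\zf$. We must show that "the $\alpha$ decided by stem $\vec s$" is well defined without choosing measure-one sets, and that the index set of stems is small enough for $\mu$'s completeness to apply. I would handle both by working with canonical names, as in the preceding proposition, and by invoking the Prikry property to decide each statement "$\check\alpha\in\dot X$" by a pure extension. The resulting stem-indexed family of subsets of $\Theta$ is then definable in $V$ and has size less than $\Theta$.
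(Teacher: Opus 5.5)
Your argument is correct. The reduction to $V[G_c]$ via \cref{lem: capturing} is exactly the paper's, but the finite case follows a different route.

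The paper never treats $\mathbb P_c$ as a single forcing. It lifts $\mu$ one Prikry coordinate at a time by iterating Apter's lemma (\cref{lifting_a_measure}). At each step it must also lift the next measure $\mu_{\kappa_i}$ through the forcings already done, and note that $\mathbb P_{\kappa_i}$ is dense in the Prikry forcing of the lifted measure. Ultrafilterness, $\Theta$-completeness and normality of $\nu_c$ then come out of that black box.

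You instead run a L\'evy--Solovay style argument for the whole product. The ``small forcing'' is replaced by the partition of $\mathbb P_c$ into fewer than $\Theta$ classes (there are $|\max c|$ of them) of conditions sharing a stem tuple. Each class consists of pairwise compatible conditions. What this buys is a self-contained proof using only stem-compatibility and the $\Theta$-completeness and normality of $\mu$ in $V$. No choice enters, because the decided value depends only on the stem, and the constant value of each $f_{\vec s}$ is unique. In fact you do not need the Prikry property for $\mathbb P_c$ at all: genericity suffices. That is convenient, since the paper proves the Prikry property only for a single $\mathbb P_\mu$. The paper's route is shorter given the citation.

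Two points should be tightened.
\begin{enumerate}
\item $\mathbb P_c$ itself is not of size $<\Theta$; it is not even well-orderable, since conditions carry measure-one sets. Only the set of stems has size $<\Theta$, and that is all you actually use.
\item The density step should be done pairwise. From $Z\s Y^{\vec s}$ you cannot infer that a condition in $G_c$ with stem $\vec s$ forces $\check Z\s\dot X$, because the measure-one sets witnessing $\alpha\in Y^{\vec s}$ depend on $\alpha$. Instead, given $\alpha,\beta\in Z$, pick $q\le p$ in $G_c$ deciding both statements, and let $\vec s$ be its stem. Compatibility of conditions with stem $\vec s$ gives $q\Vdash\check\alpha\in\dot X$ iff $\alpha\in Y^{\vec s}$, and likewise for $\beta$. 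Homogeneity of $Z$ for $Y^{\vec s}$ then yields $\alpha\in X\leftrightarrow\beta\in X$.
\end{enumerate}
The same pairwise argument handles $\dot F$. You must, however, put $\Theta\setminus\mathrm{dom}(f_{\vec s})$ into the intersection for every stem with $\mathrm{dom}(f_{\vec s})\notin\mu$, since normality gives nothing there.
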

\begin{proof}
    Working in $N$, define $\mu^\ast\df\{x\s\Theta\mid\exists y\in\mu\, (y\s x)\}$. We claim that $\mu^\ast$ witnesses the measurability of $\Theta$. Let us first show that $\mu^*$ is an ultrafilter. So let $x\in\mathcal{P}(\Theta)^N$ and suppose $x\notin\mu^\ast$.     By  \cref{lem: capturing}, $x\in V[G_c]$ for some finite $c\s \mcl M$.
    Suppose that $c\df\{k_0,\dots,\kappa_n\}$ where $\kappa_0<\cdots<\kappa_n$. Then \cref{lifting_a_measure} yields that \[
    V[G_{\kappa_0}]\models``\nu_0\df\{x\s\Theta\mid\exists y\in\mu\, (y\s x)\}\text{ is a measure on $\Theta$''}.
    \]
    Working in $V[G_{\kappa_0}]$, we first apply \cref{lifting_a_measure} to lift $\mu_{\kappa_1}$ to $\mu_{1}^*\df \mu_{\kappa_1}^{*G_{\kappa_0}}$,  and applying once again  \cref{lifting_a_measure} to $\kappa_1$, $\mbb P_{\mu_{1}^*}$ and $\Theta$. Note that since then   forcing $\mbb P_{\kappa_1}$ is dense in $\mbb P_{\mu_{1}^*}$,  we get \[
    V[G_{\kappa_0}\times G_{\kappa_1}]\models``\nu_1\df\{x\s\Theta\mid\exists y\in\mu\, (y\s x)\}\text{ is a measure on $\Theta$''}.
    \]
    Carrying out the above argument $n+1$ many times, we have that
     
    \begin{equation}\label{eq: measure_lifted}
        V[G_c]\models``\nu_c\df\{x\s\Theta\mid\exists y\in\mu\, (y\s x)\}\text{ is a measure on $\Theta$''}.
    \end{equation} 
 As $x\notin\mu^\ast$, it has to be the case that $x\notin\nu_c$. Therefore, $\Theta\setminus x\in\nu_c$ and so $\Theta\setminus x\in\mu^\ast$, showing that $\mu^\ast$ is an ultrafilter. 

    Next we deal with $\Theta$-completeness. Suppose towards a contradiction that there exists a sequence $\langle x_\alpha\mid\alpha<\beta\rangle\in N$ of sets in $\mu^\ast$ with $\beta<\Theta$ such that $\bigcap_{\alpha<\beta}x_\alpha\notin\mu^\ast$. As  above, we use  \cref{lem: capturing} to infer the existence of a finite set $c\s A$ such that $\bigcap_{\alpha<\beta}x_\alpha\in V[G_c]$. The previous paragraph shows that $\nu_c$, as defined in \cref{eq: measure_lifted}, is a measure on $\Theta$ in $V[G_c]$. By definition, $\bigcap_{\alpha<\beta}x_\alpha\notin\nu_c$, and so $\Theta\setminus\bigcap_{\alpha<\beta}x_\alpha\in\nu_c$. This means that there is $Y\in \mu$ with \[
    Y\s\Theta\setminus\bigcap_{\alpha<\beta}x_\alpha=\bigcup_{\alpha<\beta}(\Theta\setminus x_\alpha).
    \]
    Define, in $N$, the map $f:Y\rightarrow\beta$  by $f(y)\df\min\{\alpha<\beta\mid y\notin x_\alpha\}$. Clearly, $f$ can be coded as a set of ordinals and so we may appeal to  \cref{lem: capturing} to deduce the existence of a finite set $d$ such that $c\s d\s \mcl M$ and $f\in V[G_d]$. We have already argued that $V[G_d]\models``\nu_d\df\{x\s\Theta\mid\exists y\in\mu\, (y\s x)\}\text{ is a measure on $\Theta$''}$. Working in $V[G_d]$, one can easily show that there is $\alpha<\beta$ with $f^{-1}(\alpha)\df\{y\in Y\mid f(y)=\alpha\}\in\nu_d$. If not $Y\setminus f^{-1}(\alpha)=\{y\in Y\mid f(y)\neq \alpha\}\in \nu_d$, for all $\alpha<\beta$. By $\Theta$-completeness of $\nu_d$\footnote{It is easily seen that $\mbb R$-completeness implies $\Theta$-completeness.}, we have $\bigcap_{\alpha<\beta}(Y\setminus f^{-1}(\alpha))\in\nu_d$, contradicting the fact that $\bigcap_{\alpha<\beta}(Y\setminus f^{-1}(\alpha))=\emptyset$. So let $\alpha<\beta$ be such that $f^{-1}(\alpha)\in\nu_d$. Thus, there is $Z\s f^{-1}(\alpha)$ with $Z\in\mu$, and so $Z\s \Theta\setminus x_\alpha$. By definition, we get $\Theta\setminus x_\alpha\in\mu^\ast$, but this is impossible since its complement, i.e. $x_\alpha$, is also in $\mu^\ast$. 

    It remains to show that $N\models``\mu^\ast\text{ is normal''}$. If $$N\models``g:\Theta\rightarrow\Theta\text{ is a regressive function'',}$$ then as $g$ may be coded by a set of ordinals in $N$, for some finite $c\s \mcl M$ we have $g\in V[G_c]$. Since $V[G_c]\models``\nu_c\text{ is a measure on $\Theta$''}$, it must be the case that $V[G_c]\models``g\text{ is constant on some $X\in\nu_c$''}$. In particular, $N\models``g\restriction X\text{ is constant and $X\in\mu^\ast$''}$.
\end{proof}

\section{Open Questions}
In \cite{gitik2024first}, the authors proved that if $\kappa$ is a strongly inaccessible non-Mahlo measurable cardinal, then $K\models o(\kappa)\geq\kappa+1$. Accordingly, they wonder whether the lower bound can be improved. Along the same line and in light of our main theorem, we ask the following.  
\begin{question}
What is the exact consistency strength of $``\Theta$ is the least inaccessible and the least measurable cardinal''?
\end{question}
The Prikry-type construction carried out to get the configuration described in our main theorem seems to be flexible enough to manipulate the measure on $\Theta$. For instance, the following are open.
\begin{question}
    Assume $\zf+\ad+\dc_{\mathbb R}$.
    \begin{enumerate}
        \item Can the least inaccessible be the least measurable that carries a normal measure concentrating on points of cofinality $\omega_1$?
        \item Can the least inaccessible be the least measurable that carries a normal measure concentrating on regular cardinals?
    \end{enumerate}
\end{question}
On the other hand, one may look at the structure of ulfrafilters on $\Theta$.
\begin{question}
    Assume $\Theta$ is inaccessible and the least measurable cardinal. How large can $o(\Theta)$ be?
\end{question}

\begin{question}

Given an ordinal $\xi<\Theta$.
    Is it consistent that $\Theta$ is the least measurable and the least strongly regular, but the $\xi$-th uncountable regular cardinal?
\end{question}
\emph{\small Acknowledgments.
This work was partially supported by the Simons Foundation grant (award no. SFI-MPS-T-Institutes-00010825) and from State Treasury funds as part of a task commissioned by the Minister of Science and Higher Education under the project “Organization of the Simons Semesters at the Banach Center - New Energies in 2026-2028” (agreement no. MNiSW/2025/DAP/491). The third author’s work is funded by the National Science Center, Poland under the Weave-Unisono Call, registration number UMO-2023/05/Y/ST1/00194.}
\printbibliography[heading=bibintoc]
\end{document}